\numberwithin{equation}{section}
\numberwithin{figure}{section}
\theoremstyle{plain}
\newtheorem{thm}{\protect\theoremname}[section]
\theoremstyle{plain}
\newtheorem{prop}[thm]{\protect\propositionname}
\theoremstyle{remark}
\newtheorem{rem}[thm]{\protect\remarkname}
\theoremstyle{plain}
\newtheorem{lem}[thm]{\protect\lemmaname}
\theoremstyle{definition}
\newtheorem{defn}[thm]{\protect\definitionname}
\theoremstyle{definition}
\newtheorem{example}[thm]{\protect\examplename}
\newenvironment{lyxlist}[1]
	{\begin{list}{}
		{\settowidth{\labelwidth}{#1}
		 \setlength{\leftmargin}{\labelwidth}
		 \addtolength{\leftmargin}{\labelsep}
		 }}
	{\end{list}}
\pgfplotsset{compat=1.15}
\renewcommand{\d}{\mathrm{d}}
\renewcommand{\rho}{\varrho}
\renewcommand{\epsilon}{\varepsilon}
\newcommand{\1}{\mathbbm{1}} 
\newcommand{\e}{\mathrm{e}} 
\newcommand{\er}{\mathfrak{e}} 
\renewcommand{\c}{\mathfrak{C}}
\newcommand{\D}{\mathcal{D}}
\DeclareMathOperator{\supp}{supp}
\DeclareMathOperator{\card}{card}
\renewcommand{\tilde}{\widetilde}
\renewcommand{\emptyset}{\varnothing}
\renewcommand{\phi}{\varphi}
\def\N{\mathbb N}
\def\d{\;\mathrm d}
\def\R{\mathbb{R}}
\def\Z{\mathbb{Z}}
\def\Q{\mathcal{Q}}
\def\J{\mathfrak{J}}
\def\GL{\tau}
\definecolor{lime}{HTML}{A6CE39}
\DeclareRobustCommand{\orcidicon}{%
	\begin{tikzpicture}
	\draw[lime, fill=lime] (0,0) 
	circle [radius=0.16] 
	node[white] {{\fontfamily{qag}\selectfont \tiny ID}};
	\draw[white, fill=white] (-0.0625,0.095) 
	circle [radius=0.007];
	\end{tikzpicture}
	\hspace{-2mm}
}
\xdef\csname orcid\x\endcsname{\noexpand\href{https://orcid.org/\csname orcidauthor\x\endcsname}{\noexpand\orcidicon}}
\providecommand{\definitionname}{Definition}
\providecommand{\examplename}{Example}
\providecommand{\lemmaname}{Lemma}
\providecommand{\propositionname}{Proposition}
\providecommand{\remarkname}{Remark}
\providecommand{\theoremname}{Theorem}
\begin{document}
\title[Quantization dimensions of negative order]{ Quantization dimensions of negative order}
\author{Marc Kesseböhmer\orcidA{}}
\email{mhk@uni-bremen.de}
\author{Aljoscha Niemann\orcidB{}}
\email{niemann1@uni-bremen.de}
\address{Institute for Dynamical Systems, FB 3 – Mathematics and Computer Science,
University of Bremen, Bibliothekstr. 5, 28359 Bremen, Germany}
\begin{abstract}
We investigate the possibility of defining meaningful upper and lower
quantization dimensions for a compactly supported Borel probability
measure of order $r$, including negative values of $r$. To this
end, we use the concept of partition functions, which generalizes
the idea of the $L^{q}$-spectrum and in this way naturally extends
the work in {[}M. Kesseböhmer, A. Niemann, and S. Zhu. Quantization
dimensions of probability measures via Rényi dimensions. \emph{Trans.
Amer. Math. Soc. }376.7 (2023){]}. In particular, we provide natural
fractal geometric bounds as well as easily verifiable necessary conditions
for the existence of the quantization dimensions. The exact asymptotics
of the quantization error of negative order for absolutely continuous
measures are stated, whereby an open question from {[}S. Graf, H.
Luschgy. \emph{Math. Proc. Cambridge Philos. Soc.} 136, 3 (2004){]}
regarding the geometric mean error is also answered in the affirmative.
\end{abstract}

\keywords{quantization dimension, geometric mean error, Minkowski dimension,
Rényi dimension, optimized coarse multifractal dimension, $L^{q}$-spectrum,
partition function, partition entropy.}
\subjclass[2000]{28A80; 60E05; 62E20; 94A12}

\maketitle

\section{Introduction and statement of main results\label{sec:Introduction-and-background}}

The quantization problem for probability measures dates back to the
1980s (cf. \cite{MR651809}), where it was first established in the
context of information theory, and has recently received renewed attention,
as appropriate quantization of continuous data is fundamental to many
machine learning applications \cite{MR4603662,MR4119154}.

The goal is to examine the asymptotic behavior of the errors in the
convergence of a sequence of approximations of a given random variable
with a quantized version of that random variable (i.\,e\@., with
a random variable that takes at most $n\in\N$ different values) in
terms of $r$-th power mean with $r\geq0$. The quantization dimension
(of order $r$) is then defined as the exponential rate of this convergence
as $n$ tends to infinity. These ideas have been extensively explored
in the mathematical literature by numerous authors, including \cite{MR1434978,MR1844394,MR1764176,MR2055056,MR3035159,MR3300877,MR3345508,MR3558153,pages2015introduction,MR3415729,MR3489058,MR3636381,MR3731698,MR4119544,MR4206065,KN22b,MR4704055,zhu2024asymptotic}.
The objective of this study is to demonstrate that this concept can
be naturally extended to encompass also negative values of $r$. In
turn, we gain new insights into the asymptotics of the geometric mean
error for absolutely continuous measures, for which only the upper
bound has been established in \cite{MR2055056}. In this paper, Graf
and Luschgy asked whether the upper bound they established already
represents the exact asymptotic for any absolute continuous measure,
a question we can now answer in the affirmative under very mild conditions
(see \prettyref{thm:absoltutelyContinuousCase} and the following
remark).

\subsection{Basic set-up and first observations}

Let $X$ be a bounded random variable with values in the normed vector
space $\left(\R^{d},\left\Vert \,\cdot\,\right\Vert \right)$, $d\in\N$,
defined on the probability space $\left(\Omega,\mathcal{A},\mathbb{P}\right)$
and we let $\nu\coloneqq\mathbb{P}\circ X^{-1}$ denote its (compactly
supported) distribution. For a given $n\in\N$, let $\mathcal{F}_{n}$
denote the set of all Borel measurable functions $f:\R^{d}\rightarrow\R^{d}$
which take at most $n$ different values, i.\,e\@. with $\card\left(f\left(\R^{d}\right)\right)\leq n$,
and call an element of $\mathcal{F}_{n}$ an \emph{$n$-quantizer}.
Our aim is to approximate $X$ with a quantized version of $X$, i.\,e\@.
$X$ will be approximated by $f\circ X$ with $f\in\mathcal{F}_{n}$
where we quantify this approximation with respect to the $r$-quasi-norm.
More precisely, we are interested in the\emph{ $n$-th quantization
error} of $\nu$\emph{ of order $r\in\left(0,+\infty\right]$} given
by
\[
\er_{n,r}(\nu)\coloneqq\inf_{f\in\mathcal{F}_{n}}\left\Vert X-f\circ X\right\Vert _{L_{\mathbb{P}}^{r}}=\inf_{f\in\mathcal{F}_{n}}\left\Vert \text{id}-f\right\Vert _{L_{\nu}^{r}},
\]
where $\left\Vert g\right\Vert _{L_{\mathbb{P}}^{r}}\coloneqq\left(\int\left\Vert g\right\Vert ^{r}\,\text{d}\mathbb{P}\right)^{1/r}$
for $0<r<\infty$ and $\left\Vert g\right\Vert _{L_{\mathbb{P}}^{\infty}}\coloneqq\inf\left\{ c\geq0:\left\Vert g\right\Vert \leq c\,\ensuremath{\mathbb{P}\text{-a.s.}}\right\} $
and $\text{id}:x\mapsto x$ denotes the identity map on $\R^{d}$.

The problem can also be expressed using only the distribution $\nu$,
which we assume is compactly supported throughout the paper. For every
$n\in\mathbb{N}$, we write for the set of non-empty sets with at
most $n$ elements $\mathcal{A}_{n}:=\left\{ A\subset\mathbb{R}^{d}:1\leq\card\left(A\right)\leq n\right\} $.
Then due to \cite[Lemma 3.1]{MR1764176} an equivalent formulation
of the $n$-th quantization error of $\nu$ of order $r\in\left[0,+\infty\right)$
is given by 
\[
\er_{n,r}\left(\nu\right)=\left\{ \begin{array}{ll}
\inf_{A\in\mathcal{A}_{n}}\left\Vert d\left(\,\cdot\,,A\right)\right\Vert _{L_{\nu}^{r}}, & r>0,\\
\inf_{A\in\mathcal{A}_{n}}\exp\int\log d(x,A)\d\nu(x), & r=0,
\end{array}\right.
\]
with $d(x,A)\coloneqq\min_{y\in A}\left\Vert x-y\right\Vert $. By
\cite[Lemma 6.1]{MR1764176} it follows that $\er_{n,r}\left(\nu\right)\rightarrow0$
or more precisely, $\er_{n,r}\left(\nu\right)=O(n^{-1/d})$ and, if
$\nu$ is singular with respect to the Lebesgue measure, then $\er_{n,r}\left(\nu\right)=o\left(n^{-1/d}\right)$
(see \cite[Theorem 6.2]{MR1764176}). We define the \emph{upper }and\emph{
lower quantization dimension for $\nu$ of order $r$} by 
\[
\overline{D}_{r}\left(\nu\right):=\limsup_{n\to\infty}\frac{\log n}{-\log\er_{n,r}\left(\nu\right)},\;\;\underline{D}_{r}\left(\nu\right):=\liminf_{n\to\infty}\frac{\log n}{-\log\er_{n,r}\left(\nu\right)},
\]
with the natural convention that $1/\log(0)=0$. In particular, when
$\er_{n,r}\left(\nu\right)=0$ for all $n$ large, then the dimension
vanishes. If $\overline{D}_{r}\left(\nu\right)=\underline{D}_{r}\left(\nu\right)$,
we call the common value the \emph{quantization dimension for $\nu$
of order $r$} and denote it by $D_{r}\left(\nu\right)$.

We point out that for $r\geq1$ the relevant $r$-quasi-norm is indeed
a norm and only for $r\in\left(0,1\right)$ a quasi-norm, which means
that only a generalized triangular inequality holds. In this note,
we will investigate the possibility and significance of also considering
negative values for $r$ in the definition of the quantization error,
i.\,e\@. we extend the definition of $\er_{n,r}\left(\nu\right)$
to values $r<0$, by setting 
\[
\er_{n,r}\left(\nu\right)\coloneqq\inf_{A\in\mathcal{A}_{n}}\left(\int d\left(x,A\right)^{r}\d\nu\left(x\right)\right)^{1/r}.
\]

Our first observation is that if $\nu$ has an atom, that is there
exists $x\in\R^{d}$ with $\nu\left(\left\{ x\right\} \right)>0$,
then for all $r<0$ we have that the integrant in the definition of
$\er_{n,r}\left(\nu\right)$ is equal to $+\infty$ on a set of positive
measure whenever $x\in A$. Therefore, in this situation $\er_{n,r}\left(\nu\right)=0$
for all $n\in\N$ and hence $D_{r}\left(\nu\right)=0$ for $r<0$.
This means that if the measure has a too high concentration of mass,
the quantization dimension will vanish for negative values of $r$.
This idea can be taken further: We will assume without loss of generality
that the support of $\text{\ensuremath{\nu}}$ is contained in the
half-open unit cube $\Q\coloneqq(0,1]^{d}$, see \cite[Introduction]{KN22b}.
Although it is not strictly necessary to assume that $\nu$ is normalised,
it will always be assumed to be finite throughout the paper. Let us
define the $\infty$-dimension of $\nu$ by \textbf{
\[
\dim_{\infty}\left(\nu\right)\coloneqq\liminf_{n\to\infty}\frac{\max_{Q\in\mathcal{D}_{n}}\log\nu\left(Q\right)}{\log2^{-n}}=\liminf_{r\to0}\frac{\sup_{x\in\R^{d}}\log\nu\left(B_{r}\left(x\right)\right)}{\log r},
\]
}where $B_{r}\left(x\right)$ denotes the ball in $\left(\R^{d},\left\Vert \,\cdot\,\right\Vert \right)$
of radius $r>0$ and center $x$, and $\mathcal{D}_{n}$ denotes the
partition of $\Q$ by half-open cubes of the form $\prod_{i=1}^{d}\left(k_{i}2^{-n},\left(k_{i}+1\right)2^{-n}\right]$
with $\left(k_{1},\ldots,k_{d}\right)\in\Z^{d}$. We set $\mathcal{D}\coloneqq\bigcup_{n\in\N}\mathcal{D}_{n}$,
which defines –after adding the empty set– a semiring of sets. Note
that $\dim_{\infty}\left(\nu\right)\leq d$. The following observation
provides us with the relevant range of meaningful values for the order
of quantization.
\begin{prop}
\label{prop:r<-dimoo(nu)}For a probability measure $\nu$ on $\R^{d}$
and $r<-\dim_{\infty}\left(\nu\right)$, we have
\[
\er_{n,r}\left(\nu\right)=0,\;\text{for all}\:n\in\N
\]
and, in particular, $D_{r}\left(\nu\right)=0$. Furthermore, if the
measure $\nu$ has an absolutely continuous part with respect to the
Lebesgue measure, then the above identity holds for $r=-d$.
\end{prop}

\begin{rem}
Note that for absolutely continuous measure the second claim in the
above proposition is only meaningful if $d=\dim_{\infty}\left(\nu\right)$.
In instances where the quantity $\dim_{\infty}\left(\nu\right)$ is
strictly smaller than $d$ and $\nu$ is absolutely continuous, we
refer to Examples \ref{exa:abs_cnt_with_strict_inequ} and \ref{exa:Our-second-example}.

Also note that \prettyref{prop:r<-dimoo(nu)} is consistent with our
first observation, since $\dim_{\infty}\left(\nu\right)=0$ whenever
$\nu$ has atoms. Hence the interesting range of values for $r$ in
the formulation of the quantization problem is the interval $\left[-\dim_{\infty}\left(\nu\right),+\infty\right]$
which we will focus on in this paper. This observation also shows
that the quantization dimension with negative order is particularly
sensitive to regions of high concentration of the underlying measure
— a fact that could prove particularly useful for applications.
\end{rem}

\begin{proof}
Fix $r<-\dim_{\infty}\left(\nu\right)$ and $t\in\left(r,-\dim_{\infty}\left(\nu\right)\right)$.
By definition of $\dim_{\infty}\left(\nu\right)$, we find sequences
$\left(s_{\ell}\right)\in\left(\R_{>0}\right)^{\N}$ and $\left(x_{\ell}\right)\in\left(\R^{d}\right)^{\N}$
such that $s_{\ell}\searrow0$ and $\nu\left(B_{s_{\ell}}\left(x_{\ell}\right)\right)\geq s_{\ell}^{-t}$.
Setting $V_{n,r}\left(\nu\right)\coloneqq\er_{n,r}\left(\nu\right)^{r}$,
this gives, for $\ell\to\infty$, 
\begin{align*}
V_{n,r}\left(\nu\right) & \geq V_{1,r}\left(\nu\right)\geq\sup_{A\in\mathcal{A}_{1}}\int d\left(x,A\right)^{r}\d\nu\left(x\right)\\
 & \geq\int_{B_{s_{\ell}}\left(x_{\ell}\right)}d\left(x,\left\{ x_{\ell}\right\} \right)^{r}\d\nu\left(x\right)\geq\nu\left(B_{s_{\ell}}\left(x_{\ell}\right)\right)s_{\ell}^{r}\geq s_{\ell}^{r-t}\to\infty.
\end{align*}
This proves the first claim.

To see that $\er_{n,-d}\left(\nu\right)=0$ for each $n\in\N$, by
Lebesgue's differentiation theorem we find a point $x_{0}\in\R^{d}$
such that $\lim_{k\to\infty}\nu\left(B_{2^{-k}}\left(x_{0}\right)\right)/2^{-kd}=c>0$.
Consequently, for $k\in\N$ large enough (say $k\geq k_{0})$ we have
for $C_{k}\coloneqq B_{2^{-k}}\left(x_{0}\right)\setminus B_{2^{-k-1}}\left(x_{0}\right)$
that $\nu\left(C_{k}\right)>\left(c/2\right)2^{-d\left(k+1\right)}.$
This gives for each $n\in\N$ 
\[
V_{n,-d}\left(\nu\right)\geq\sum_{k\geq k_{0}}\int_{C_{k}}d\left(x,\left\{ x_{0}\right\} \right)^{-d}\d\nu\left(x\right)\geq\sum_{k\geq k_{0}}\frac{c}{2}2^{-d\left(k+1\right)}2^{dk}=\infty.
\]
This shows the second claim.
\end{proof}

\subsection{\label{subsec:Absolutely-continuous-measures}Absolutely continuous
measures – exact asymptotics}

We are able to extend the classical result in quantization theory
for absolutely continuous probability measures with respect to the
Lebesgue measure, which goes back to Zador \cite{MR651809} and was
generalized by Bucklew and Wise in \cite{MR651819}; for a rigorous
proof for $r\geq1$, which also works for $r>0$, see \cite[Theorem 6.2]{MR1764176}.

If for $\kappa>0$ the limit 
\[
\c_{r,\kappa}\left(\nu\right)\coloneqq\lim_{n\to\infty}n^{1/\kappa}\er_{n,r}\left(\nu\right)
\]
exists in $\left[0,+\infty\right]$, we refer to its value as the
$\kappa$\emph{-dimensional quantization coefficient of order $r$
of $\nu$}. In the literature, the lower and upper quantization coefficients
defined via limes inferior and superior have also frequently been
considered \cites{MR2055056}{MR1844394}{MR2990259}{MR3636381}{MR4616041}.
If this number (or more generally, its upper and lower version) is
positive and finite, the quantization dimension of order $r$ also
exists and is equal to $\kappa$. Note that $\c_{r,\kappa}\left(\nu\right)=0$
is equivalent to saying that $\er_{n,r}\left(\nu\right)=o\left(n^{-1/\kappa}\right)$.

In \prettyref{subsec:Absolutely-continuous-case}, we will see that,
for the Lebesgue measure restricted to $\Q$ and for which we write
$\Lambda$, the coefficient $\c_{r,d}\left(\Lambda\right)$ also exists
and is finite for negative $r$, and positive only for $r>-d$ (cf.
\prettyref{lem:QC_UnitCube}). In general, for an absolutely continuous
probability $\nu\coloneqq h\Lambda$ with density $h$ we set $s_{h}\coloneqq\sup\left\{ s>0:\left\Vert h\right\Vert _{s}<\infty\right\} $,
where for abbreviation we write $\left\Vert \,\cdot\,\right\Vert _{s}$
instead of $\left\Vert \,\cdot\,\right\Vert _{L_{\Lambda}^{s}}$.
By \prettyref{lem:AbsolLq} and \ref{lem:lowerboudonBeta} we have
that $s_{h}\leq d/\left(d-\dim_{\infty}\left(\nu\right)\right)$.
In particular, $-d\leq-\dim_{\infty}\left(\nu\right)\leq d/s_{h}-d$
and these inequalities might be strict as \prettyref{exa:abs_cnt_with_strict_inequ}
demonstrates. For a measurable function $h$ we set
\[
\Phi_{r}\left(h\right)\coloneqq\begin{cases}
\left\Vert h\right\Vert _{d/\left(d+r\right)}^{1/r}, & r\in\left(-d,\infty\right)\setminus\left\{ 0\right\} ,\\
\exp\left(-\left(1/d\right)\int h\log\left(h\right)\d\Lambda\right), & r=0,\\
0, & r\leq-d
\end{cases}
\]
where $\left\Vert h\right\Vert _{d/\left(d+r\right)}^{1/r}=0$ whenever
$r<0$ and $h^{d/\left(d+r\right)}$ not integrable.
\begin{thm}
\emph{\label{thm:absoltutelyContinuousCase} }Let $\nu=h\Lambda$
be an absolutely continuous Borel probability measure on $\Q$ with
density $h$ and $s_{h}$ given as above. For $r\neq d/s_{h}-d$,
and also for $r=d/s_{h}-d$ provided $\left\Vert h\right\Vert _{s_{h}}=\infty$,
the $d$-dimensional quantization coefficient of order $r$ of $\nu$
exists and its finite value equals
\[
\c_{r,d}\left(\nu\right)=\c_{r,d}\left(\Lambda\right)\Phi_{r}\left(h\right).
\]
Moreover, we have $\c_{r,d}\left(\nu\right)>0$ for $r>d/s_{h}-d$,
and $\c_{r,d}\left(\nu\right)=0$ for $r<d/s_{h}-d$ or, if $\left\Vert h\right\Vert _{s_{h}}=\infty$,
also for $r=d/s_{h}-d$.
\end{thm}

The proof of this first theorem is outlined at the end of our paper
in \prettyref{subsec:Absolutely-continuous-case}. This section covers
the necessary prerequisites and explains the main steps of the proof.
\begin{rem}
The special value $r=0$, for which $\er_{n,0}\left(\nu\right)$ coincides
with the geometric mean error, has been previously examined in several
papers (see \cite{MR2055056,MR2416331,MR2727173,MR2954531,MR3016541,MR3035089,MR3300877,MR3415729,MR4206065}).
The present approach, which now takes negative orders into account,
provides a straightforward answer to a question posed by Graf and
Luschgy over 20 years ago. In \cite{MR2055056} they showed that 
\begin{equation}
\limsup_{n\to\infty}n^{1/d}\er_{n,0}\left(\nu\right)\leq\c_{0,d}\left(\Lambda\right)\Phi_{0}\left(h\right),\label{eq:uppBoundG-L}
\end{equation}
and pointed out that it ``\emph{remains an open question whether
one has a genuine limit in \prettyref{eq:uppBoundG-L} for absolutely
continuous probabilities $\mathbb{P}$ different from uniform distributions
on cubes}.'' Whenever $s_{h}>1$, our first main result covers the
geometric mean error, i.\,e\@. $r=0$, and hence answers the question
in the affirmative.

It should also be noted that explicit formulas for $\c_{r,d}\left(\Lambda\right)$
are often difficult to obtain and that its value dependents on the
underlying norm on $\R^{d}$. Nevertheless, the quantization dimension
is independent of the underlying norm. For the sake of simplicity,
both the euclidean and the maximum norm will be used in the subsequent
proofs.

In \prettyref{exa:Our-second-example} we provide an example with
$\left\Vert h\right\Vert _{s_{h}}<\infty$ such that the $d$-dimensional
quantization coefficient of order $d/s_{h}-d$ exists and is positive.
This critical situation $\left\Vert h\right\Vert _{s_{h}}<\infty$
will be revisited in \prettyref{rem:CriticlValue}.

The fact that $\left\Vert h\right\Vert _{d/\left(d+r\right)}=\infty$
for a certain negative $r$ indicates a strong concentration of mass
and can be considered analogous to singular measures and positive
$r$. In both cases, the convergence rate of the quantization error
is $o\left(n^{-1/d}\right)$.
\end{rem}

\subsection{Partition functions and related concepts}

Building upon the ideas developed in the context of spectral problems
for Kre\u{\i}n–Feller operators in \cite{KN2022,KN2022b} and for
approximation orders of Kolmogorov, Gel'fand, and linear widths in
\cite{KesseboehmerWiegmann,MR4444736}, we will address the quantization
problem as initiated in \cite{KN22b}, where we considered positive
order and determind the exact value of the upper quantization dimension
with the help of the $L^{q}$-spectrum. In this instance, however,
we will consider negative order, for which novel concepts and strategies
are required, particularly the more general concept of the $\J$-partition
function. Interestingly, the underlying methods elaborated in \cite{MR4882804}
and initially utilized in the context of spectral problems prove to
be precisely the appropriate tools in this context. The main difference
to the results obtained for the positive order case is that, this
time, our formalism provides the exact value of the lower quantization
dimension, as well as an upper bound for the upper quantization dimension
(see \prettyref{thm:Main}). Additionally, we provide the exact value
for the upper and lower quantisation dimension in the regular cases
(see \prettyref{subsec:Regularity-results}).

With $\mathcal{D}_{n}\left(Q\right)\coloneqq\left\{ Q'\in\mathcal{D}_{n}:Q'\subset Q\right\} $,
$n\in\N$, we let $\mathcal{D}\left(Q\right)\coloneqq\bigcup_{n\in\N}\mathcal{D}_{n}\left(Q\right)$.
For $r>-\dim_{\infty}\left(\nu\right)$, we then set
\[
\J\coloneqq\J_{\nu,r}:\mathcal{D}\to\R_{\geq0},\quad Q\mapsto\max_{Q'\in\mathcal{D}\left(Q\right)}\nu\left(Q'\right)\Lambda\left(Q'\right)^{r/d}
\]
and define the \emph{$\J$-partition function} $\tau_{\J}$, for $q\in\R_{\geq0}$,
by
\[
\tau_{\J}(q)\coloneqq\limsup_{n\rightarrow\infty}\tau_{\J,n}(q),\:\;\text{with}\quad\tau_{\J,n}\left(q\right)\coloneqq\frac{\log\left(\sum_{Q\in\mathcal{D}_{n},\J\left(Q\right)>0}\J(Q)^{q}\right)}{\log(2^{n})}.
\]
Define the critical value
\[
q_{r}\coloneqq\inf\left\{ q>0:\tau_{\J_{\nu,r}}\left(q\right)<0\right\} .
\]
Note that for $r\geq0$ we have $\J_{\nu,r}\left(Q\right)=\nu\left(Q\right)\Lambda\left(Q\right)^{r/d}$
and consequently for non-negative values of $r$, $\tau_{\J}\left(q\right)=\beta_{\nu}\left(q\right)-qr$,
where 
\[
\beta_{\nu}:q\mapsto\limsup_{n\rightarrow\infty}\beta_{\nu,n}\left(q\right)\quad\text{with }\quad\beta_{\nu,n}\left(q\right)\coloneqq\text{\ensuremath{\frac{\log\left(\sum_{Q\in\mathcal{D}_{n},\nu\left(Q\right)>0}\nu(Q)^{q}\right)}{\log(2^{n})}}}
\]
denotes the \emph{$L^{q}$-spectrum of} $\nu$, which was the central
object in \cite{KN22b}. In \prettyref{exa:Menger_Sponge} (see also
\prettyref{fig:Moment-generating-function}) we provide a measure
for which $\tau_{\J_{\nu,r}}\left(q\right)=\beta_{\nu}\left(q\right)-qr$
holds also for negative values of $r$. We always have $\tau_{\J_{\nu,r}}\left(q\right)\geq\beta_{\nu}\left(q\right)-qr$
by the definition $\tau_{\J}$. It is easy to construct purely atomic
measures such that $q_{r}=0$ for all $r>0$ and $\beta_{\nu}(0)>0$,
see \cite{KN2022}. In this case it turns out that the upper quantization
dimension is also $0$.

For the following fundamental inequalities for $q_{r}$ we assume
$\dim_{\infty}(\nu)>0$. Then $\beta_{\nu}(0)=\tau_{\J_{\nu,r}}\left(0\right)=\overline{\dim}_{M}\left(\nu\right)\geq\dim_{\infty}(\nu)>0$
and for $-\dim_{\infty}(\nu)<r<0$ and $q\geq0$, we have $\tau_{\J_{\nu,r}}\left(q\right)\geq\beta_{\nu}(q)-qr$.
For $q\geq1$, the convexity of $\tau_{\J_{\nu,r}}$ gives
\[
\tau_{\J_{\nu,r}}\left(q\right)\geq\left(\tau_{\J_{\nu,r}}\left(1\right)-\tau_{\J_{\nu,r}}\left(0\right)\right)q+\tau_{\J_{\nu,r}}\left(0\right)\geq\left(-r-\beta_{\nu}\left(0\right)\right)q+\beta_{\nu}\left(0\right).
\]
On the other hand, again the convexity of $\tau_{\J_{\nu,r}}$ gives
for $q\geq1$ 
\[
\tau_{\J_{\nu,r}}\left(q\right)\leq\left(-\dim_{\infty}(\nu)-r\right)q+\dim_{\infty}(\nu)+r+\tau_{\J_{\nu,r}}\left(1\right).
\]
Combining both inequalities and using the definition of $q_{r}$ proves
for $r\in\left(-\dim_{\infty}(\nu),0\right)$
\begin{equation}
1<\frac{\overline{\dim}_{M}\left(\nu\right)}{\overline{\dim}_{M}\left(\nu\right)+r}\leq q_{r}\leq1+\frac{\tau_{\J_{\nu,r}}\left(1\right)}{\dim_{\infty}(\nu)+r}\leq1+\frac{\overline{\dim}_{M}\left(\nu\right)-\dim_{\infty}(\nu)}{\dim_{\infty}(\nu)+r}.\label{eq:qr_exists-1}
\end{equation}
Moreover, for $r>0$, due the convexity of $\beta_{\nu}$ and the
fact that $\beta_{\nu}(q)=\tau_{\J_{\nu,r}}\left(q\right)+rq\leq\left(1-q\right)\overline{\dim}_{M}\left(\nu\right)$
for $q\in\left[0,1\right]$, we conclude 
\begin{equation}
0<q_{r}\leq\frac{\overline{\dim}_{M}\left(\nu\right)}{\overline{\dim}_{M}\left(\nu\right)+r}<1.\label{eq:qr_exists}
\end{equation}
Further, we will need some ideas from entropy theory: Let $\Pi$ denote
the set of finite partitions of $\Q$ by cubes from $\mathcal{D}$.
We define
\[
\mathcal{M}_{\nu,r}\left(x\right)\coloneqq\inf\left\{ \card\left(P\right):P\in\Pi,\,\max_{Q\in P}\J_{\nu,r}\left(Q\right)<1/x\right\} .
\]
and
\[
\overline{h}_{\nu,r}\coloneqq\limsup_{x\to\infty}\frac{\log\mathcal{M}_{\nu,r}\left(x\right)}{\log x},\quad\underline{h}_{\nu,r}\coloneqq\liminf_{x\to\infty}\frac{\log\mathcal{M}_{\nu,r}\left(x\right)}{\log x}
\]
will be called the\emph{ upper, }resp.\emph{ lower, $\left(\nu,r\right)$-partition
entropy.} We write $\overline{h}_{r}\coloneqq\overline{h}_{\nu,r}$
and\emph{ $\underline{h}_{r}\coloneqq\lim_{\varepsilon\downarrow0}\underline{h}_{\nu,r-\varepsilon}$}.

For all $n\in\N$ and $\alpha>0$, we define 
\[
\mathcal{N}_{\nu,\alpha,r}\left(n\right)\coloneqq\card N_{\nu,\alpha,r}\left(n\right),\quad N_{\nu,\alpha,r}\left(n\right)\coloneqq\left\{ Q\in\mathcal{D}_{n}:\J_{\nu,r}\left(Q\right)\geq2^{-\alpha n}\right\} ,
\]
and set
\[
\overline{F}_{\nu,r}\left(\alpha\right)\coloneqq\limsup_{n}\frac{\log^{+}\left(\mathcal{N}_{\nu,\alpha,r}\left(n\right)\right)}{\log2^{n}}\text{ and }\underline{F}_{\nu,r}\left(\alpha\right)\coloneqq\liminf_{n}\frac{\log^{+}\left(\mathcal{N}_{\nu,\alpha,r}\left(n\right)\right)}{\log2^{n}}.
\]
Following \cite{KN2022b}, we refer to the quantities
\[
\overline{F}_{r}\coloneqq\overline{F}_{\nu,r}\coloneqq\sup_{\alpha>0}\frac{\overline{F}_{\nu,r}\left(\alpha\right)}{\alpha}\text{ and }\underline{F}_{r}\coloneqq\underline{F}_{\nu,r}\coloneqq\sup_{\alpha>0}\frac{\underline{F}_{\nu,r}\left(\alpha\right)}{\alpha}
\]
as the $\left(\nu,r\right)$\emph{-upper}, resp. \emph{lower, optimized
coarse multifractal dimension}. We know from general consideration
obtained in \cite{MR4882804} that we always have $\overline{F}_{r}=q_{r}=\overline{h}_{r}\geq\underline{h}_{r}\geq\underline{F}_{r}$,
see \prettyref{prop:GeneralResultOnPartitionFunction}.

\subsection{Main results}

In our main theorem we combine the main result in \cite{KN22b} for
positive $r$ with our new results on negative $r$. Note that our
assumption in \cite{KN22b}, that \emph{$\sup_{x\in(0,1)}\beta_{\nu}(x)>0$,}
is implied by our stronger assumption $\dim_{\infty}\left(\nu\right)>0$.
It is important to note that under this stronger assumption $r>0$
implies $0<q_{r}<1$, just as $-\dim_{\infty}\left(\nu\right)<r<0$
implies $q_{r}>1$ (see \prettyref{eq:qr_exists} and \prettyref{eq:qr_exists-1}).
Finally, we set 
\[
\dim_{H}\left(\nu\right)\coloneqq\inf\left\{ \dim_{H}\left(A\right):\nu\left(A\right)>0\right\} 
\]
with $\dim_{H}\left(A\right)$ referring to the \emph{Hausdorff dimension}
of $A\subset\R^{d}$. The proof of our main theorem is postponed to
the last section.
\begin{thm}
\label{thm:Main}For a compactly supported probability measure $\nu$
on $\R^{d}$ we have
\begin{enumerate}
\item \label{enu:1}for $r\in\left(0,+\infty\right)$, \emph{
\[
\frac{r\underline{F}_{r}}{1-\underline{F}_{r}}\leq\underline{D}_{r}\left(\nu\right)\leq\frac{r\underline{h}_{r}}{1-\underline{h}_{r}}\leq\frac{r\overline{h}_{r}}{1-\overline{h}_{r}}=\overline{D}_{r}\left(\nu\right)=\frac{rq_{r}}{1-q_{r}}=\frac{r\overline{F}_{r}}{1-\overline{F}_{r}},
\]
}
\item \label{enu:2}for $r\in\left(-\dim_{\infty}\left(\nu\right),0\right)$,
\begin{enumerate}
\item ${\displaystyle \frac{r\overline{F}_{r}}{1-\overline{F}_{r}}=\frac{rq_{r}}{1-q_{r}}=\underline{D}_{r}\left(\nu\right)=\frac{r\overline{h}_{r}}{1-\overline{h}_{r}}},$
\item ${\displaystyle \frac{r\overline{h}_{r}}{1-\overline{h}_{r}}\leq\frac{r\underline{h}_{r}}{1-\underline{h}_{r}}\leq\overline{D}_{r}\left(\nu\right)\leq\frac{r\underline{F}_{r}}{1-\underline{F}_{r}}}$
under the assumption $\underline{F}_{r}>1$,
\end{enumerate}
\item \label{enu:3} for $r=0$, and under the assumption that $\dim_{\infty}\left(\nu\right)>0$
and $r\mapsto q_{r}$ is differentiable at $0$, we have $\beta_{\nu}$
is differentiable at $1$ and 
\[
D_{0}\left(\nu\right)=-\beta_{\nu}'\left(1\right)=\dim_{H}\left(\nu\right).
\]
\end{enumerate}
\end{thm}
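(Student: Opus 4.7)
My plan is to handle Parts (1) and (3) by reduction to known results and to concentrate on the novel content in Part (2). For Part (1), observe that for $r\geq 0$ the maximum in the definition of $\J_{\nu,r}(Q)$ is attained at $Q'=Q$, so $\J_{\nu,r}(Q)=\nu(Q)\Lambda(Q)^{r/d}$ and $\tau_\J(q)=\beta_\nu(q)-qr$; hence $q_r$ agrees with the critical exponent in \cite{KN22b} and Part (1) becomes a direct reformulation of that paper's main theorem. For Part (3), the identification $D_0(\nu)=\dim_H(\nu)$ for the geometric-mean quantization error is classical (cf.\ \cite{MR1764176}), and $-\beta_\nu'(1)=\dim_H(\nu)$ under differentiability of $\beta_\nu$ at $1$ comes from the standard multifractal formalism.

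For the main content, Part (2), the strategy is to sandwich $\er_{n,r}(\nu)^r$ between two expressions involving the partition-entropy function $\mathcal{M}_{\nu,r}$ and then invoke the general chain $\overline{F}_r=q_r=\overline{h}_r\geq\underline{h}_r\geq\underline{F}_r$ of \cite{KN2023}. A preliminary observation: for $r<0$ the map $(\,\cdot\,)^{1/r}$ is order-reversing, so $\er_{n,r}(\nu)^r=\sup_{A\in\mathcal{A}_n}\int d(\cdot,A)^r\d\nu$, meaning that lower bounds on the integral translate to upper bounds on $\er_{n,r}(\nu)$, and vice versa.

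For the upper estimate on $\er_{n,r}(\nu)$, I would fix $x>0$ and construct the canonical partition $P\in\Pi$ obtained by iteratively subdividing every cell $Q$ with $\J_{\nu,r}(Q)\geq 1/x$; then each leaf $Q\in P$ satisfies $\J_{\nu,r}(Q)<1/x$, while every immediate parent $\widehat{Q}$ (being refined) satisfies $\nu(\widehat{Q})\Lambda(\widehat{Q})^{r/d}\geq 1/x$, and $|P|\asymp\mathcal{M}_{\nu,r}(x)$. Selecting one point from each parent yields a quantizer $A$ with $\card(A)\lesssim|P|$; using $d(x',A)\leq\diam(\widehat{Q})\lesssim\Lambda(\widehat{Q})^{1/d}$ for $x'\in Q$ and the fact that $r<0$ reverses inequalities under $t\mapsto t^r$, grouping by parents leads to
\[
\int d(x',A)^r\d\nu(x')\gtrsim\mathcal{M}_{\nu,r}(x)/x,
\]
hence $\er_{\mathcal{M}_{\nu,r}(x),r}(\nu)\lesssim(\mathcal{M}_{\nu,r}(x)/x)^{1/r}$.

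The complementary lower estimate on $\er_{n,r}(\nu)$ is the main obstacle: the singularity of $d(\cdot,A)^r$ near $A$ for $r<0$ is exactly where the assumption $r>-\dim_\infty(\nu)$ is exploited. For any $A\in\mathcal{A}_n$, a dyadic-scale layer-cake argument combined with $\nu(B_s(a))\lesssim s^{\dim_\infty(\nu)-\epsilon}$ yields the key inequality $\int_Q d(\cdot,A)^r\d\nu\lesssim\J_{\nu,r}(Q)$ for each cell $Q$ of a partition $P_A\in\Pi$ adapted to $A$ (i.e., each cell of $P_A$ contains at most one point of $A$) with $|P_A|\lesssim n$; calibrating the dyadic scale so that $\max_{Q\in P_A}\J_{\nu,r}(Q)<1/x$ (forcing $n\gtrsim\mathcal{M}_{\nu,r}(x)$) gives $\int d(\cdot,A)^r\d\nu\lesssim n/x$ and hence $\er_{n,r}(\nu)\gtrsim(n/x)^{1/r}$. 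Combining the two-sided estimates via routine dimensional arithmetic and the chain $\overline{F}_r=q_r=\overline{h}_r\geq\underline{h}_r\geq\underline{F}_r$ yields the bounds in Part (2)(a) and (2)(b); the assumption $\underline{F}_r>1$ in (2)(b) ensures $1-\underline{F}_r<0$ so that $r\underline{F}_r/(1-\underline{F}_r)>0$ and the stated upper bound on $\overline{D}_r(\nu)$ is positive and hence meaningful.
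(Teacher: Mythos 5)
Your overall architecture matches the paper's: parts (1) and (3) are indeed reductions to \cite{KN22b} and to known results on the geometric mean error and the multifractal formalism, and the substance of (2) is a two-sided estimate for $\er_{n,r}(\nu)^r$ combined with the chain $\overline{F}_r=q_r=\overline{h}_r\geq\underline{h}_r\geq\underline{F}_r$. However, both of your central estimates have genuine gaps.

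In the upper estimate on $\er_{n,r}(\nu)$, the claim that a refined parent $\widehat{Q}$ satisfies $\nu(\widehat{Q})\Lambda(\widehat{Q})^{r/d}\geq 1/x$ is false: the stopping condition is $\J_{\nu,r}(\widehat{Q})\geq 1/x$, which only says that \emph{some descendant} $Q''\subset\widehat{Q}$ satisfies $\nu(Q'')\Lambda(Q'')^{r/d}\geq 1/x$ (for $r<0$ the weight $\Lambda(Q')^{r/d}$ grows as $Q'$ shrinks, so the maximum is typically not attained at $\widehat{Q}$ itself). The quantizer point must be placed inside that maximizing descendant — this is exactly why the paper defines $\J$ as a maximum over $\mathcal{D}(Q)$ and takes $A$ to be the centers of the maximizing subcubes $Q'$ of the cubes in $N_{\nu,\alpha,r}(n)$; placing a point merely somewhere in the parent yields $\int_Q d(\cdot,A)^r\d\nu\geq\diam(\widehat{Q})^r\nu(Q)$ with no usable lower bound on $\nu(Q)$ for a leaf. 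Moreover, the bound $\overline{D}_r\leq r\underline{F}_r/(1-\underline{F}_r)$ in 2(b) requires good quantizers for \emph{every} large cardinality $k$, which the paper extracts from the counting functions $\mathcal{N}_{\nu,\alpha,r}$ rather than from $\mathcal{M}_{\nu,r}$; this liminf/limsup bookkeeping is precisely the source of the asymmetry between 2(a) and 2(b) and cannot be dismissed as routine arithmetic.

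More seriously, your key inequality $\int_Q d(\cdot,A)^r\d\nu\lesssim\J_{\nu,r}(Q)$ is false in general. The dyadic layer-cake around a point $a\in Q$ gives $\sum_m 2^{-mr}\nu(B_{2^{-m}}(a)\cap Q)$, and each term is bounded only by a constant times $\J_{\nu,r}(Q)$; when the local dimension at $a$ equals $-r$ every scale contributes comparably and the sum diverges (truncating with $\nu(B_s(a))\lesssim s^{\dim_\infty(\nu)-\epsilon}$ still leaves a factor $\log(1/\J_{\nu,r}(Q))$). The paper avoids this by running the argument with $\J_{\nu,r-\epsilon}$, so that the $n$-th scale contributes at most $t_k2^{-n\epsilon}$ and the geometric series converges — which is why $\underline{h}_r$ is \emph{defined} as $\lim_{\epsilon\downarrow0}\underline{h}_{\nu,r-\epsilon}$ and why the lower bound for $\overline{D}_r$ involves that limit. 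You must build in such a loss and verify it washes out in the dimension. In addition, requiring that each cell of $P_A$ contain at most one point of $A$ does not control the integral: a point of $A$ just outside a cell makes $d(\cdot,A)^r$ blow up inside it, and a dyadic partition separating $n$ arbitrary points with $\card(P_A)\lesssim n$ and simultaneously $\max_Q\J_{\nu,r}(Q)<1/x$ need not exist. The paper's Lemmas 2.3 and 2.4 — decomposing each cell $\tilde{Q}$ into the families $E_n(\tilde{Q})$ of subcubes at distance at least their side length from $A$, and bounding $\sum_{\tilde{Q}}\card(A\cap B_n(\tilde{Q}))\leq3^d\card(A)$ over parallel sets — are exactly the missing device; without an equivalent, the factor $n=\card(A)$ in your bound $\int d(\cdot,A)^r\d\nu\lesssim n/x$ is not justified.
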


\begin{rem}
It should be noted that in \cite{KN22b}, the notion of the \emph{(upper)
generalized Rényi dimension of $\nu$ }\cite{zbMATH03124309} was
employed. In our context, it is necessary to replace the concept of
the $L^{q}$-spectrum with the more general concept of the $\tau_{\J_{\nu,r}}$-partition
function, where for $r>-\dim_{\infty}\left(\nu\right)$, we set
\[
\overline{\mathfrak{R}}_{\nu}\left(q\right)\coloneqq\begin{cases}
\tau_{\J_{\nu,r}}(q)/\left(1-q\right), & \text{for }q\neq1,\\
\limsup_{n}\left(\sum_{C\in\mathcal{D}_{n}}\nu(C)\log\nu(C)\right)/\log\left(2^{-n}\right), & \text{for }q=1.
\end{cases}
\]
In this manner, we derive for both positive and negative values of
$r$ the remarkable identity 
\[
\overline{\mathfrak{R}}_{\nu}\left(q_{r}\right)=\frac{rq_{r}}{1-q_{r}},
\]
which can be applied to \prettyref{thm:Main} \prettyref{enu:1} and
\prettyref{enu:2}, providing an alternative expression for $\overline{D}_{r}\left(\nu\right)$
whenever $r$ is positive and $\underline{D}_{r}\left(\nu\right)$
whenever $r$ is negative.

The connection between the derivatives of $r\mapsto q_{r}$ and $\beta_{\nu}$
becomes apparent in the proof at the beginning of \prettyref{sec:Upper-bounds-1},
where we show that $\beta'_{\nu}$ is differentiable at $1$ with
$\beta'_{\nu}\left(1\right)=1/\partial_{r}q_{r}|_{r=0}$. This fact
is contingent on the monotonicity of $r\mapsto\underline{D}_{r}\left(\nu\right)$
on $\left(-\dim_{\infty}\left(\nu\right),0\right]$, which we establish
in \prettyref{lem:monontonicity}.
\end{rem}

\subsection{Fractal-geometric bounds and basic properties}

For a compactly supported probability measure $\nu$ on $\R^{d}$,
we let let $\overline{\dim}_{M}\left(\nu\right)\coloneqq\overline{\dim}_{M}\left(\supp\left(\nu\right)\right)$,
where $\overline{\dim}_{M}(A)$, denotes the \emph{upper Minkowski
dimension} of the bounded set $A\subset\R^{d}$.

The fact that $\tau_{\J}$ is proper, convex with $\beta_{\nu}\left(0\right)=\tau_{\J}\left(0\right)=\overline{\dim}_{M}\left(\nu\right)$,
$\tau_{\J}\left(q\right)\geq\beta_{\nu}\left(q\right)-rq$ for $q\geq0$,
and that its asymptotic slope $\lim_{q\to\infty}\tau_{\J}\left(q\right)/q$
is equal to $-r-\dim_{\infty}\left(\nu\right)$ plays a crucial role
for the following observations. To start this investigation, we first
make a simple observation which is well known for positive $r$.
\begin{lem}
\label{lem:monontonicity}If $\dim_{\infty}\left(\nu\right)>0$, then
the functions $r\mapsto\underline{D}_{r}\left(\nu\right)$ and $r\mapsto\overline{D}_{r}\left(\nu\right)$
are both monotonically increasing on the interval $\left(-\dim_{\infty}\left(\nu\right),0\right]$.
\end{lem}

\begin{proof}
Note that for $-\dim_{\infty}\left(\nu\right)<s<r<0$ and $A\in\mathcal{A}_{n}$
we have by Hölder's inequality (assuming without loss of generality
that $\nu$ is normalised) 
\[
\int d\left(x,A\right)^{r}\d\nu\leq\left(\int d\left(x,A\right)^{s}\d\nu\right)^{r/s}\leq\left(V_{n,s}\left(\nu\right)\right)^{r/s}\implies\left(\int d\left(x,A\right)^{r}\d\nu\right)^{1/r}\geq\er_{n,s}\left(\nu\right)
\]
and by Jensen's inequality, to cover the case $r=0$, 
\begin{align*}
\int\log d\left(x,A\right)^{r}\d\nu & \leq\log\int d\left(x,A\right)^{r}\d\nu\\
\implies & \exp\int\log d\left(x,A\right)\d\nu\geq\left(\int d\left(x,A\right)^{r}\d\nu\right)^{1/r}\geq\er_{n,r}\left(\nu\right).
\end{align*}
Taking in both cases the infimum over all $A\in\mathcal{A}_{n}$,
gives 
\[
\er_{n,s}\left(\nu\right)\leq\er_{n,r}\left(\nu\right)\leq\er_{n,0}\left(\nu\right)
\]
and the claim follows.
\end{proof}
Let us now discuss the behaviour of the quantization dimension at
the relevant boundary points $-\dim_{\infty}\left(\nu\right)$ and
$+\infty$. The case $+\infty$ follows from \cite{KN22b} and by
observing 
\begin{equation}
\lim_{r\nearrow+\infty}\overline{D}_{r}\left(\nu\right)=\lim_{q\searrow0}\beta_{\nu}\left(q\right)\leq\beta_{\nu}\left(0\right)=\overline{\dim}_{M}\left(\nu\right)=\overline{D}_{\infty}\left(\nu\right),\label{eq:UpperboundMinkowski}
\end{equation}
where the last equality can be found in \cite[Theorem 11.7, Proposition 11.9]{MR1764176}.
To discuss the boundary point $-\dim_{\infty}\left(\nu\right)$ we
need the following auxiliary quantity
\[
a_{\nu}\coloneqq\sup\left\{ q_{r}:r\in\left(-\dim_{\infty}\left(\nu\right),0\right)\right\} .
\]

\begin{lem}
\label{lem:GeometricBounds}For a compactly supported probability
measure $\nu$ on $\R^{d}$ with $\dim_{\infty}\left(\nu\right)>0$,
\begin{equation}
\dim_{\infty}\left(\nu\right)\leq\lim_{r\searrow-\dim_{\infty}\left(\nu\right)}\underline{D}_{r}\left(\nu\right)={\displaystyle \frac{a_{\nu}}{a_{\nu}-1}\dim_{\infty}\left(\nu\right)},\label{eq:boundaries_D_r}
\end{equation}
where we set $a_{\nu}/\left(a_{\nu}-1\right)=1$ whenever $a_{\nu}=+\infty$.
\end{lem}

The proofs will be given at the end of \prettyref{subsec:Lower-bounds},
where we also show that $a_{\nu}$ is strictly greater than $1$.
See also \cite[Proposition 1.7]{KN22b} for the corresponding upper
bounds for the upper quantization dimension of order $r>0$.
\begin{rem}
\label{rem:CriticlValue} It is easy to construct an absolutely continuous
measure with a density that has one singularity, such that $\beta_{\nu}$
is piecewise linear, which gives rise to the above described behavior
with $a_{\nu}<\infty$, see \prettyref{exa:Our-second-example}.

The above observation and lemma demonstrate that the boundary behavior
is only partially encoded by $\tau_{\J}$. Equation \prettyref{eq:UpperboundMinkowski}
shows that continuity of $r\mapsto\overline{D}_{r}\left(\nu\right)$
in $r=+\infty$ if and only if the $L^{q}$-spectrum $\beta_{\nu}$
is continuous from the right in $0$ or, equivalently, the same holds
for $\tau_{\J_{\nu,r}}$ for all $r>0$. In general, continuity of
$r\mapsto\underline{D}_{r}\left(\nu\right)$ in $r=-\dim_{\infty}\left(\nu\right)$
from the right cannot be derived from $\tau_{\J}$ alone. In point
of fact, \prettyref{prop:r<-dimoo(nu)} together with \prettyref{thm:Main}
covers all values of $r$ except the critical value $r=-\dim_{\infty}\left(\nu\right)$.
We would like to point out that the behaviour at this critical value
is not easily accessible and depends very much on the measure under
consideration:

For instance, as we have already observed in \prettyref{thm:absoltutelyContinuousCase},
for the uniform distribution $\Lambda$ we have $\dim_{\infty}\left(\Lambda\right)=d$,
while 
\[
D_{r}\left(\Lambda\right)=\begin{cases}
d, & r\in\left(-\dim_{\infty}\left(\Lambda\right),\infty\right),\\
0, & r\leq-\dim_{\infty}\left(\Lambda\right).
\end{cases}
\]
That is to say, $r\mapsto D_{r}\left(\Lambda\right)$ is discontinuous
and continuous from the left in $r=-\dim_{\infty}\left(\Lambda\right)$.

On the other hand, in \prettyref{exa:Our-second-example} we construct
an absolutely continuous measure $\nu$ with $d>\dim_{\infty}\left(\nu\right)>0$
such that 
\[
\lim_{r\searrow-\dim_{\infty}\left(\nu\right)}\underline{D}_{r}\left(\nu\right)=\underline{D}_{-\dim_{\infty}\left(\nu\right)}\left(\nu\right)=d,
\]
that is, $r\mapsto D_{r}\left(\nu\right)$ is this time discontinuous
and continuous from the right in $r=-\dim_{\infty}\left(\nu\right)$.
\end{rem}

\subsection{Regularity results\label{subsec:Regularity-results}}

As a second main result we find necessary conditions for the upper
and lower quantization dimension to coincide, which are easy to check
in many situations.
\begin{defn}
We define two notions of regularity for a compactly supported probability
measure $\nu$ on $\R^{d}$ such that $\dim_{\infty}\left(\nu\right)>0$.
\begin{enumerate}
\item The measure $\nu$ is called \emph{multifractal-regular at $r$ ($r$-MF-regular)}
if $\underline{F}_{\nu,r}=\overline{F}_{\nu,r}$.
\item The measure $\nu$ is called \emph{partition function regular at $r$}
\emph{($r$-PF-regular)}
\begin{enumerate}
\item $\tau_{\J}\left(q\right)=\liminf_{n}\tau_{\J,n}\left(q\right)\in\R$
for all $q\in\left(q_{r}-\varepsilon,q_{r}\right)$, for some $\varepsilon>0$,
or\label{enu:-LqRegularity1}
\item $\tau_{\J}\left(q_{r}\right)=\liminf_{n}\tau_{\J,n}\left(q_{r}\right)$
and $\tau_{\J}$ is differentiable at $q_{r}$.
\end{enumerate}
\end{enumerate}
\end{defn}

The following theorem, which is a direct consequence of \cite[Theorem 1.12]{MR4882804},
shows that the spectral partition function is a valuable auxiliary
concept to determine the quantization dimension for a given measure
$\nu$.
\begin{thm}
\label{thm:LqRegularImpliesRegular} The following regularity implications
hold for $r\in\left(-\dim_{\infty}\left(\nu\right),+\infty\right)\setminus\left\{ 0\right\} $:
\[
\nu\,\,\text{is }r\text{-PF-regular}\:\ensuremath{\implies}\:\nu\,\,\text{is \ensuremath{r}-MF-regular \, \ensuremath{\implies}\, }\underline{D}_{r}\left(\nu\right)=\overline{D}_{r}\left(\nu\right)=\frac{rq_{r}}{1-q_{r}}.
\]
\end{thm}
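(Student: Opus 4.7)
The plan is to split the statement into two implications and address each separately.

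For the first implication, that $r$-PF-regularity implies $r$-MF-regularity, there is essentially no new work: the plan is to invoke \cite{KN2023}, Theorem~1.11, applied to the set function $\J_{\nu,r}$. The two sub-conditions (a) and (b) in the definition of $r$-PF-regularity are designed to mirror exactly the two regularity hypotheses of that abstract theorem. The only bookkeeping step is to verify that $\J_{\nu,r}\colon\D\to\R_{\geq 0}$ satisfies the monotonicity and boundedness axioms used in \cite{KN2023}, which is immediate from the definition $\J_{\nu,r}(Q)=\max_{Q'\in\D(Q)}\nu(Q')\Lambda(Q')^{r/d}$.

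For the second implication, the strategy is to feed the hypothesis $\underline{F}_r=\overline{F}_r$ into the general chain
\[
\overline{F}_r = q_r = \overline{h}_r \geq \underline{h}_r \geq \underline{F}_r
\]
recalled in \prettyref{prop:GeneralResultOnPartitionFunction}; this collapses all four quantities to $q_r$. A case split on the sign of $r$, reading off \prettyref{thm:Main}, then closes the argument. For $r>0$, the two-sided sandwich in \prettyref{thm:Main}\,\prettyref{enu:1} immediately collapses to $rq_r/(1-q_r)$, yielding $\underline{D}_r(\nu)=\overline{D}_r(\nu)=rq_r/(1-q_r)$. For $r\in(-\dim_\infty(\nu),0)$, item \prettyref{enu:2}(a) of \prettyref{thm:Main} already gives $\underline{D}_r(\nu)=rq_r/(1-q_r)$ unconditionally, and item (b) --- whose prerequisite is $\underline{F}_r>1$ --- collapses to the same value once we know that prerequisite is satisfied.

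The only real obstacle is verifying $\underline{F}_r>1$ in the negative-order regime, which I would handle by a short computation at $q=1$: taking $Q'=Q$ in the definition of $\J_{\nu,r}$ gives $\J_{\nu,r}(Q)\geq \nu(Q)\Lambda(Q)^{r/d}=2^{-nr}\nu(Q)$ for $Q\in\mathcal{D}_n$, so summing yields $\sum_{Q\in\mathcal{D}_n}\J_{\nu,r}(Q)\geq 2^{-nr}$ and therefore $\tau_{\J}(1)\geq -r>0$. The convexity and properness of $\tau_{\J}$, noted just before \prettyref{cor:GeometricBounds}, then force $q_r>1$, so that under MF-regularity $\underline{F}_r=q_r>1$; this is exactly the side condition required by \prettyref{thm:Main}\,\prettyref{enu:2}(b). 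Beyond this one-line observation, the argument is pure bookkeeping on the Main Theorem.
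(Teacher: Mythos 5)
Your proposal is correct and follows the same route as the paper, which simply declares the theorem ``a direct consequence of [KN2023, Thm.\ 1.11]'' for the first implication and leaves the second to the chain $\overline{F}_{r}=q_{r}=\overline{h}_{r}\geq\underline{h}_{r}\geq\underline{F}_{r}$ combined with Theorem~\ref{thm:Main}. Your extra verification that $q_{r}>1$ for $r\in(-\dim_{\infty}(\nu),0)$ (via $\tau_{\J}(1)\geq -r>0$ together with convexity and the negative asymptotic slope, which forces $\tau_{\J}$ to be decreasing) is a detail the paper only asserts inside Proposition~\ref{prop:LowerBound}, so spelling it out is a welcome addition rather than a deviation.
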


We would like to point out that all examples discussed in \cite{KN22b}
for which the quantization dimension exists (\emph{self-similar measure,
inhomogeneous self-similar measure, Gibbs measures with possible overlap})
remain literally valid for $r\in\left(-\dim_{\infty}\left(\nu\right),0\right)$
as well.
\begin{example}
\label{exa:Menger_Sponge} We briefly discuss one particularly regular
example. We consider the self-similar measure $\nu$ supported on
a\emph{ dyadic Menger sponge} in $\R^{3}$ with the four defining
contractions 
\[
\left\{ \phi_{k}:z\mapsto1/2\cdot z+k:k\in\left\{ \left(0,0,0\right),\left(1/2,0,0\right),\left(0,1/2,0\right),\left(0,0,1/2\right)\right\} \right\} 
\]
\begin{figure}[h]
\center{
\begin{tikzpicture}[scale=.8, every node/.style={transform shape},line cap=round,line join=round,>=triangle 45,x=1cm,y=1cm] 
\begin{axis}[ x=2.7cm,y=2.7cm, axis lines=middle, axis line style={very thick},ymajorgrids=false, xmajorgrids=false, grid style={thick,densely dotted,black!20}, xlabel= {$q$}, ylabel= {$\beta_\nu (q)$}, xmin=-0.49 , xmax=2.4 , ymin=-1.2, ymax=2.5,x tick style={thick, color=black}, xtick={0, 1,1.87},xticklabels = {0,1,$q_r$}, y tick style={thick, color=black}, ytick={-1/2,0,-log2(0.66),0.92/(1.86-1),1.35,2},yticklabels = {$r$,0,$\overline{\dim}_\infty(\nu)$,$ {D}_{r}(\nu)$,$ {D}_{0}(\nu)$,2}] 
\draw [line width=01pt,dashdotted,    domain=-0.05 :1.86] plot(\x,{1.35*(1-\x)});
\draw [line width=01pt,dashed,    domain=-0.05 :2.3] plot(\x,{-log2(0.66)*(1-\x)});
\draw[line width=1pt,smooth,samples=180,domain=-0.1:2.18] plot(\x,{log10(0.08^((\x))+0.2^((\x))+0.06^((\x))+0.66^((\x)))/log10(2)}); 
\draw [line width=01pt,solid, domain=-0.05 :2.1] plot(\x,{0.92/(1.86-1)*(1-\x)});
\draw [line width=01pt,loosely dotted , domain=-0.05 :2.3] plot(\x,{-.5*\x});


\draw [line width=.7pt,dotted, gray] (1.87 ,-0.2)--(1.87,-1); 
\draw [line width=.7pt,dotted, gray] (1 ,-0.2)--(1,-1/2);
\draw [line width=.7pt,dotted, gray] (0,-0.5)--(1,-.5);
\draw (0.07 ,2.13 ) node[anchor=north west] {$ \displaystyle{\overline{\dim}_M(\nu)}$}; 
\end{axis} 
\end{tikzpicture}}

\caption{\label{fig:Moment-generating-function} For the $L^{q}$-spectrum
of the self-similar measure $\nu$ supported on a\emph{ dyadic Menger
sponge} in $\R^{3}$ with four contractions and with probability vector
$\left(0.66,0.2,0.08,0.06\right)$ we have $\beta_{\nu}\left(q\right)=\tau_{\J_{\nu,r}}\left(q\right)+qr$,
$\beta_{\nu}\left(0\right)=2$ and $\dim_{\infty}\left(\nu\right)=\log0.66/\log2<-0.599$.
For $r=-0.5>-\dim_{\infty}\left(\nu\right)$ the intersection of the
graph of $\beta_{\nu}$ and the dashed line determines $q_{r}$. The
(solid) line through the points $\left(q_{r},\beta_{\nu}\left(q_{r}\right)\right)$
and $\left(1,0\right)$ intersects the vertical axis in $D_{r}\left(\nu\right)$.
The (dash-dotted) tangent to $\beta_{\nu}$ in $1$ intersects the
vertical axis in $D_{0}\left(\nu\right)$. Also the lower bound $D_{r}\left(\nu\right)\protect\geq\dim_{\infty}\left(\nu\right)$
becomes obvious.}
\end{figure}
and probability vector $\left(0.66,0.2,0.08,0.06\right)$. In this
example, we have that $\tau_{\J_{\nu,r}}\left(q\right)=\beta_{\nu}\left(q\right)-qr$
even for $r<0$ and the $L^{q}$-spectrum exists as a limit and is
differentiable on $\R_{>0}$. Furthermore, we have $\beta_{\nu}\left(0\right)=\overline{\dim}_{M}\left(\nu\right)=2$.
Therefore, our second main result (see \prettyref{thm:LqRegularImpliesRegular})
on regularity applies and we can read off the value of $q_{r}$ and,
as a consequence of \prettyref{thm:Main} also of $D_{r}\left(\nu\right)$,
directly from $\beta_{\nu}$ for all values $r>-\dim_{\infty}\left(\nu\right)=\log_{2}0.66\thickapprox-0.599$
as demonstrated in \prettyref{fig:Moment-generating-function}.
\end{example}

\section{\label{sec:Upper-bounds-1} Proof of main theorems}

In this section, we only give a proof for the upper and lower bounds
of the quantization dimension of order $r\in\left(-\dim_{\infty}\left(\nu\right),0\right)$
as stated in \prettyref{thm:Main} \prettyref{enu:2} and the proof
of part \prettyref{enu:3}. This is sufficient for proving \prettyref{thm:Main},
since part \prettyref{enu:1} is fully covered by \cite{KN22b}. We
conclude this section by outlining the proof of the theorem on absolutely
continuous measures, \prettyref{thm:absoltutelyContinuousCase}.

\subsection{Optimal partitions and partition entropy\label{sec:OptimalPartitions}}

We make use of some general observations from \cite{MR4882804} which
is valid for arbitrary set functions $\J:\mathcal{D}\to\R_{\geq0}$
on the dyadic cubes $\mathcal{D}$, which are monotone, $\dim_{\infty}(\J)>0$
(in particular uniformly vanishing) and locally non-vanishing with
$\J\left(\Q\right)>0$ and such that $\liminf_{n}\GL_{\J,n}\left(q\right)\in\R$
for some $q>0$. Here, \emph{uniformly vanishing} means $\lim_{k\to\text{\ensuremath{\infty}}}\sup_{Q\in\bigcup_{n\geq k}\mathcal{D}_{n}}\J\left(Q\right)=0$
and \emph{locally non-vanishing} means $\J\left(Q\right)>0$ implies
that there exists $Q'\in\mathcal{D}\left(Q\right)\setminus\left\{ Q\right\} $
with $\J\left(Q'\right)>0$. It is important to note that all these
conditions on the set function are fulfilled for our particular choice
$\J=\J_{\nu,r}$ whenever $r>-\dim_{\infty}\left(\nu\right)$ since
$\dim_{\infty}(\J_{\nu,r})=\dim_{\infty}(\nu)+r$.

We also recall the closely connected \emph{dual problem,} where we
consider 
\[
\gamma_{\J,n}\coloneqq\inf_{P\in\Pi,\card(P)\leq n}\max_{Q\in P}\J\left(Q\right)
\]
and convergence rates
\[
\overline{\alpha}_{\J}\coloneqq\limsup_{n\rightarrow\infty}\frac{\log\left(\gamma_{\J,n}\right)}{\log(n)}\;\;\text{and}\;\;\underline{\alpha}_{\J}\coloneqq\liminf_{n\rightarrow\infty}\frac{\log\left(\gamma_{\J,n}\right)}{\log(n)}.
\]
With this at hand, we can state the crucial results used in the proofs
of our main theorem as follows.
\begin{prop}[{\cite[Theorems 1.4, 1.8 \& Section 1.3]{MR4882804}}]
\label{prop:GeneralResultOnPartitionFunction} We have
\[
\overline{F}_{\J}=\overline{h}_{\J}=\frac{-1}{\overline{\alpha}_{\J}}=q_{\J}\quad\text{and }\quad\underline{F}_{\J}\leq\underline{h}_{\J}=\frac{-1}{\underline{\alpha}_{\J}}.
\]
\end{prop}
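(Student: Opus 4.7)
The plan is to split the chain of identities and the single inequality into three independent pieces.

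\textbf{Duality.} I would first establish $\overline{h}_{\J} = -1/\overline{\alpha}_{\J}$ and $\underline{h}_{\J} = -1/\underline{\alpha}_{\J}$ by observing that $\mathcal{M}_{\J}$ and $\gamma_{\J,\cdot}$ are generalized inverses: $\gamma_{\J,n} < 1/x \Longleftrightarrow \mathcal{M}_{\J}(x) \leq n$. A standard log/log calculation along the diagonal $n = \mathcal{M}_{\J}(x)$ (tracking signs, since $\overline{\alpha}_{\J}, \underline{\alpha}_{\J} < 0$ under the uniform-vanishing hypothesis) converts this equivalence into the two claimed identities.

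\textbf{Partition function vs.\ partition entropy.} To prove $\overline{h}_{\J} \leq q_{\J}$, I would fix $q > q_{\J}$, so that $\tau_{\J}(q) < 0$ and $S_n(q) \coloneqq \sum_{Q \in \mathcal{D}_n} \J(Q)^q$ decays exponentially in $n$. A Chebyshev-type estimate $\#\{Q \in \mathcal{D}_n : \J(Q) \geq \theta\} \leq \theta^{-q} S_n(q)$, together with a greedy refinement that subdivides only those cubes of $\J$-value $\geq 1/x$ down to scale $n \sim \log x$, produces a partition of $\Q$ with $\max \J < 1/x$ of cardinality $\lesssim x^q$. Hence $\overline{h}_{\J} \leq q$, and letting $q \searrow q_{\J}$ yields $\overline{h}_{\J} \leq q_{\J}$. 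Conversely, for $q < q_{\J}$ we have $\tau_{\J,n}(q) \geq 0$ along a subsequence; comparing $\sum_{Q \in P} \J(Q)^{q}$ on an admissible partition $P$ to the full dyadic sum on fine scales (using monotonicity of $\J$ plus uniform vanishing to refine further when needed) forces $|P| \gtrsim x^q$, whence $\overline{h}_{\J} \geq q_{\J}$.

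\textbf{Coarse multifractal identity.} Layering $S_n(q)$ over dyadic bins of $\J$-values via the sets $N_{\J,\alpha}(n)$ yields the Legendre-type identity $\tau_{\J}(q) = \sup_\alpha\bigl(\overline{F}_{\J}(\alpha) - \alpha q\bigr)$, from which $q_{\J} = \sup_\alpha \overline{F}_{\J}(\alpha)/\alpha = \overline{F}_{\J}$ is immediate from the definition of $q_{\J}$. Finally, $\underline{F}_{\J} \leq \underline{h}_{\J}$ follows by the same counting: the set $N_{\J,\alpha}(n)$ itself, after a controlled completion to a partition of $\Q$, realizes $\max \J \leq 2^{-\alpha n}$ with cardinality bounded by $\mathcal{N}_{\J,\alpha}(n)$ up to lower-order terms, giving $\underline{h}_{\J} \geq \underline{F}_{\J}(\alpha)/\alpha$ for every $\alpha$.

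The hard part will be the greedy-refinement step in the second paragraph: one must ensure that turning a cube-counting bound into an honest partition of $\Q$ does not blow up the cardinality, which is precisely where uniform vanishing and local non-vanishing of $\J$ come in. The corresponding liminf argument is inherently one-sided — a supremum over $\alpha$ cannot be interchanged with a liminf in general — which is why $\underline{F}_{\J} \leq \underline{h}_{\J}$ is not upgraded to an equality.
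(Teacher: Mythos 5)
First, note that the paper does not prove this proposition at all: it is imported verbatim from the cited reference (KN2023, Thm.~1.3, Thm.~1.7 \& Sec.~1.3), so there is no in-paper argument to compare yours with. Judged on its own terms, your architecture (duality between $\mathcal{M}_{\J}$ and $\gamma_{\J,\cdot}$ via the equivalence $\gamma_{\J,n}<1/x\Leftrightarrow\mathcal{M}_{\J}(x)\le n$; a stopping-time partition plus a Chebyshev count for $\overline{h}_{\J}\le q_{\J}$; a layering/Legendre identity for $q_{\J}=\overline{F}_{\J}$) is the standard and correct one, and the step you flag as hard does go through: each cube of the stopping-time partition at level $n$ has a parent with $\J\ge 1/x$, Chebyshev bounds the number of such parents by $x^{q}S_{n-1}(q)$, and $\tau_{\J}(q)<0$ makes the sum over $n$ geometric, so the cardinality does not blow up.

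The genuine gap is in your last step, and it is a matter of direction, not of detail. The set $N_{\J,\alpha}(n)$ consists of the cubes with $\J(Q)\ge 2^{-\alpha n}$, i.e.\ precisely the cubes that \emph{violate} the constraint $\max\J<2^{-\alpha n}$; no completion of $N_{\J,\alpha}(n)$ to a partition can realize that constraint, and even if it could, an \emph{upper} bound $\card(P)\le\mathcal{N}_{\J,\alpha}(n)$ would bound $\mathcal{M}_{\J}$, and hence $\underline{h}_{\J}$, from \emph{above} --- the wrong direction for $\underline{h}_{\J}\ge\underline{F}_{\J}(\alpha)/\alpha$. What is needed is a lower bound on $\mathcal{M}_{\J}(x)$ at $x=2^{\alpha n}$: if $P\in\Pi$ satisfies $\max_{Q'\in P}\J(Q')<2^{-\alpha n}$, then by monotonicity of $\J$ no $Q\in N_{\J,\alpha}(n)$ can be contained in (or equal to) an element of $P$, so every element of $P$ meeting such a $Q$ is properly contained in it; since the cubes of $N_{\J,\alpha}(n)$ are pairwise disjoint and each contains at least one element of $P$, we get $\card(P)\ge\mathcal{N}_{\J,\alpha}(n)$. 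This monotonicity argument is also what you need, in limsup form, to close your second paragraph: the proposed comparison of $\sum_{Q\in P}\J(Q)^{q}$ with the full level-$n$ dyadic sum does not work naively, because refining a partition cube at level $\ell$ down to level $n$ multiplies the number of terms by $2^{d(n-\ell)}$; the cleanest route to $\overline{h}_{\J}\ge q_{\J}$ is $\overline{h}_{\J}\ge\overline{F}_{\J}$ (the same counting argument along a limsup subsequence) combined with $\overline{F}_{\J}=q_{\J}$ from your third paragraph.
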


For the proof of the main theorem, we divide the problem into upper
and lower bounds.

\subsection{Upper bounds}

We first prove the upper bounds, which are somewhat less demanding
than the lower bounds discussed thereafter. In this section, the underlying
norm on $\R^{d}$ is assumed to be euclidean.
\begin{prop}
For $r\in\left(-\dim_{\infty}\left(\nu\right),0\right)$, we have
\[
\underline{D}_{r}\left(\nu\right)\leq\frac{r\overline{F}_{r}}{1-\overline{F}_{r}}=\frac{r\overline{h}_{r}}{1-\overline{h}_{r}}=\frac{rq_{r}}{1-q_{r}}\;
\]
and if $\underline{F}_{r}>1$,
\[
\overline{D}_{r}\left(\nu\right)\leq\frac{r\underline{F}_{r}}{1-\underline{F}_{r}}.
\]
\end{prop}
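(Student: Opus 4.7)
The plan is to bound $\er_{n,r}(\nu)$ from above for $r\in(-\dim_\infty(\nu),0)$ by constructing explicit codebooks from dyadic partitions adapted to $\J_{\nu,r}$. Observe first that for $r<0$ the defining infimum becomes a supremum, $\er_{n,r}(\nu)^r = \sup_{A \in \mathcal{A}_n}\int d(x,A)^r\,\d\nu(x)$, so upper bounds on $\er_{n,r}(\nu)$ arise from lower bounds on this supremum via a single well-chosen $A$. Given any finite partition $P\in\Pi$ of $\Q$, for each $Q\in P$ choose a sub-cube $Q^*\in\mathcal{D}(Q)$ realising the maximum $\nu(Q^*)\Lambda(Q^*)^{r/d}=\J_{\nu,r}(Q)$ and an arbitrary point $x_Q\in Q^*$, and set $A_P=\{x_Q:Q\in P\}$. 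Working with the max-norm so that $\diam(Q^*)=\Lambda(Q^*)^{1/d}$, one has $d(x,A_P)\leq\Lambda(Q^*)^{1/d}$ for $x\in Q^*$, and since $r<0$,
\[
\int d(x,A_P)^r\,\d\nu \;\geq\; \sum_{Q\in P}\nu(Q^*)\Lambda(Q^*)^{r/d} \;=\; \sum_{Q\in P}\J_{\nu,r}(Q).
\]
Taking the $r$th root (which reverses the inequality for $r<0$) produces the fundamental estimate $\er_{|P|,r}(\nu)\leq\bigl(\sum_{Q\in P}\J_{\nu,r}(Q)\bigr)^{1/r}$.

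To turn this into the claimed bound, for each dyadic scale $n\in\N$ and threshold $\alpha>0$ I would take $P_{\alpha,n}$ to be $N_{\nu,\alpha,r}(n)$ completed to a partition of $\Q$ by the maximal dyadic cubes in $\Q\setminus\bigcup N_{\nu,\alpha,r}(n)$. A routine counting argument in the dyadic tree — each completion cube has a parent intersecting $N_{\nu,\alpha,r}(n)$, and the number of such parents is at most $n\,\mathcal{N}_{\nu,\alpha,r}(n)$ — shows $|P_{\alpha,n}| = O\!\bigl(n\,\mathcal{N}_{\nu,\alpha,r}(n)\bigr)$, while by construction $\sum_{Q\in P_{\alpha,n}}\J_{\nu,r}(Q)\geq \mathcal{N}_{\nu,\alpha,r}(n)\cdot 2^{-\alpha n}$. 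Along a subsequence $(n_k)$ realising the $\limsup$ defining $\overline{F}_{\nu,r}(\alpha)$, this yields $\log|P_{\alpha,n_k}| = n_k\overline{F}_{\nu,r}(\alpha)\log 2 + o(n_k)$ and
\[
-\log\er_{|P_{\alpha,n_k}|,r}(\nu) \;\geq\; \frac{n_k}{-r}\bigl(\overline{F}_{\nu,r}(\alpha)-\alpha\bigr)\log 2 + o(n_k),
\]
provided $\overline{F}_{\nu,r}(\alpha)>\alpha$. Consequently $\underline{D}_r(\nu)\leq -r\,\overline{F}_{\nu,r}(\alpha)/(\overline{F}_{\nu,r}(\alpha)-\alpha)$; letting $\overline{F}_{\nu,r}(\alpha)/\alpha\nearrow\overline{F}_r$ and invoking the identities $\overline{F}_r=q_r=\overline{h}_r$ from \prettyref{prop:GeneralResultOnPartitionFunction} (which forces $\overline{F}_r>1$ under $\dim_\infty(\nu)>0$) gives the stated bound $\underline{D}_r(\nu)\leq rq_r/(1-q_r)$ together with the full chain of equalities.

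For the bound on $\overline{D}_r(\nu)$ under $\underline{F}_r>1$, the identical construction is carried out with $\underline{F}_{\nu,r}(\alpha)$ in place of $\overline{F}_{\nu,r}(\alpha)$. Because $\underline{F}_{\nu,r}$ is defined via $\liminf$, the inequality $\mathcal{N}_{\nu,\alpha,r}(n)\geq 2^{n(\underline{F}_{\nu,r}(\alpha)-\varepsilon)}$ now holds for \emph{all} sufficiently large $n$, so the bound on $\er_{|P_{\alpha,n}|,r}(\nu)$ is valid for the entire sequence and, in combination with the monotonicity of $m\mapsto\er_{m,r}(\nu)$ to cover scales between consecutive $|P_{\alpha,n}|$, controls the $\limsup$ defining $\overline{D}_r(\nu)$; the assumption $\underline{F}_r>1$ precisely guarantees an admissible $\alpha$ with $\underline{F}_{\nu,r}(\alpha)>\alpha$ and hence a positive denominator in the resulting bound $\overline{D}_r(\nu)\leq r\underline{F}_r/(1-\underline{F}_r)$. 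The only genuine technical step is the dyadic-tree completion of $N_{\nu,\alpha,r}(n)$; the polynomial-in-$n$ inflation of $|P|$ that it produces is harmless at the logarithmic scale.
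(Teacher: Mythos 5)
Your proposal is correct and follows essentially the same strategy as the paper: build a codebook from the cubes in $N_{\nu,\alpha,r}(n)$, pick a point in the sub-cube $Q^*$ realising each $\J_{\nu,r}(Q)$, exploit that $r<0$ reverses the relevant inequalities to get $\er$-upper bounds, then use the $\limsup$-structure of $\overline{F}_{\nu,r}(\alpha)$ along a subsequence for $\underline{D}_r$, and the $\liminf$-structure of $\underline{F}_{\nu,r}(\alpha)$ along the full sequence (with interpolation between consecutive scales) for $\overline{D}_r$, finally invoking $\overline{F}_r=\overline{h}_r=q_r$ from \prettyref{prop:GeneralResultOnPartitionFunction} for the equalities. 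The one avoidable detour is your completion of $N_{\nu,\alpha,r}(n)$ to a full partition $P_{\alpha,n}$: since your fundamental estimate only needs the chosen cubes to be pairwise disjoint and not to cover $\Q$, you can simply use the $\mathcal{N}_{\nu,\alpha,r}(n)$ centers of the $Q^*$'s directly as the codebook (this is what the paper does), which removes both the dyadic-tree counting argument for $|P_{\alpha,n}|$ and the need to absorb the extra factor of $n$ at the logarithmic scale.
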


\begin{proof}
The second and third equality follows from \prettyref{prop:GeneralResultOnPartitionFunction}.
For fixed $\alpha>0$ define $c_{\alpha,n}\coloneqq\card\left(\mathcal{N}_{\nu,\alpha,r}\left(n\right)\right)$.
We begin with the upper bound for the lower quantization dimension.
For each $Q\in\mathcal{N}_{\nu,\alpha,r}\left(n\right)$ we consider
$Q'\in\mathcal{D}\left(Q\right)$ such that 
\[
\nu\left(Q'\right)\Lambda\left(Q'\right)^{r/d}=\max_{C\in\mathcal{D}\left(Q\right)}\nu\left(C\right)\Lambda\left(C\right)^{r/d}=\J_{\nu,r}\left(Q\right)
\]
 and let $A$ denote the set of all centers of the elements $Q'$
for $Q\in\mathcal{N}_{\nu,\alpha,r}\left(n\right)$. Then,
\[
\sup_{x\in Q'}d(x,A)\leq\sqrt{d}\Lambda\left(Q'\right)^{1/d}
\]
 keeping in mind that $r$ is negative,
\begin{align*}
\er_{c_{\alpha,n},r}\left(\nu\right) & \leq\left(\int d(x,A)^{r}\d\nu(x)\right)^{1/r}\leq\left(\sum_{Q\in\mathcal{N}_{\nu,\alpha,r}\left(n\right)}\int_{Q'}d(x,A)^{r}\d\nu(x)\right)^{1/r}\\
 & \leq\sqrt{d}\left(\sum_{Q\in\mathcal{N}_{\nu,\alpha,r}\left(n\right)}\Lambda\left(Q'\right)^{r/d}\nu\left(Q'\right)\right)^{1/r}=\sqrt{d}\left(\sum_{Q\in\mathcal{N}_{\nu,\alpha,r}\left(n\right)}\J_{\nu,r}\left(Q\right)\right)^{1/r}\\
 & \leq\sqrt{d}c_{\alpha,n}^{1/r}2^{-\alpha n/r}.
\end{align*}
Since $\overline{F}_{r}=q_{r}>1$ we consider only $\alpha$ in $\overline{\mathscr{A}}\coloneqq\left\{ \alpha>0:\overline{F}_{r}\left(\alpha\right)/\alpha>1\right\} \neq\emptyset$.
For such $\alpha\in\overline{\mathscr{A}}$, take a subsequence $\left(n_{k}\right)$
such that $\lim_{k}\log c_{\alpha,n_{k}}/n_{k}=\overline{F}_{r}\left(\alpha\right)>\alpha>0$
and $c_{\alpha,n_{k}}^{1/r}2^{-\alpha n_{k}/r}<1$. Then 
\[
\frac{\log c_{\alpha,n_{k}}}{-\log\er_{c_{\alpha,n_{k}},r}\left(\nu\right)}\leq\frac{r\log c_{\alpha,n_{k}}}{-r/2\log d-\log c_{\alpha,n_{k}}+\alpha n_{k}}\leq\frac{r\log c_{\alpha,n_{k}}/\left(\alpha n_{k}\right)}{1-r\log\left(d\right)/\left(2\alpha n_{k}\right)-\log c_{\alpha,n_{k}}/\left(\alpha n_{k}\right)}
\]
and therefore
\begin{align*}
\underline{D}_{r}\left(\nu\right)=\liminf_{n\to\infty}\frac{\log n}{-\log\er_{n,r}\left(\nu\right)} & \leq\lim_{k\to\infty}\frac{\log c_{\alpha,n_{k}}}{-\log\er_{c_{\alpha,n_{k}},r}\left(\nu\right)}\leq\frac{r\overline{F}_{r}\left(\alpha\right)/\alpha}{1-\overline{F}_{r}\left(\alpha\right)/\alpha}.
\end{align*}
Taking the infimum over $\alpha\in\overline{\mathscr{A}}$ and, keeping
in mind that $r<0$, yields
\[
\underline{D}_{r}\left(\nu\right)\leq\inf_{\alpha\in\overline{\mathscr{A}}}\frac{r\overline{F}_{r}\left(\alpha\right)/\alpha}{1-\overline{F}_{r}\left(\alpha\right)/\alpha}=\frac{r\sup_{\alpha>0}\overline{F}_{r}\left(\alpha\right)/\alpha}{1-\sup_{\alpha>0}\overline{F}_{r}\left(\alpha\right)/\alpha}=\frac{r\overline{F}_{r}}{1-\overline{F}_{r}}=\frac{rq_{r}}{1-q_{r}}.
\]

Finally, we show the upper bound for the upper quantization dimension
under the assumption $\underline{F}_{r}>1$. Let $Q\in E_{\alpha,n}$,
$Q'\in\mathcal{D}\left(Q\right)$ and $A$ be given as above. Then
for $\alpha>0$ such that $\underline{F}_{r}\left(\alpha\right)>0$
and every $\epsilon\in$$\left(0,\underline{F}_{r}\left(\alpha\right)\right)$
and $n$ large, we have $c_{\alpha,n}\geq2^{n\left(\underline{F}_{r}\left(\alpha\right)-\epsilon\right)}$.
With $n_{k}\coloneqq\left\lfloor \log_{2}\left(k\right)/\left(\underline{F}_{r}\left(\alpha\right)-\epsilon\right)\right\rfloor $,
$k\in\N$, we find
\begin{align*}
\er_{k,r}\left(\nu\right) & \leq\er_{c_{\alpha,n_{k}},r}\left(\nu\right)\leq\sqrt{d}c_{\alpha,n_{k}}^{1/r}2^{-\alpha n_{k}/r}.
\end{align*}
Since $\underline{F}_{r}>1$ we have $\alpha\in\mathscr{\underline{A}}\coloneqq\left\{ \alpha>0:\underline{F}_{r}\left(\alpha\right)/\alpha>1\right\} \neq\emptyset$
and $\sqrt{d}c_{\alpha,n_{k}}^{1/r}2^{-\alpha n_{k}/r}<1$ for $k$
large. Hence, we find
\begin{align*}
\frac{\log k}{-\log\er_{k,r}\left(\nu\right)} & \leq\frac{r\log k}{-r/2\log d-\log c_{\alpha,n_{k}}+\alpha n_{k}\log\left(2\right)}\\
 & \leq\frac{r\log k}{-r/2\log d-\log k+\alpha\log\left(k\right)/\left(\underline{F}_{r}\left(\alpha\right)-\epsilon\right)}.
\end{align*}
Taking the upper limit and since $\underline{F}_{r}\left(\alpha\right)/\alpha>1$
we find 
\begin{align*}
\overline{D}_{r}\left(\nu\right) & \leq\frac{r\underline{F}_{r}\left(\alpha\right)/\alpha}{1-\underline{F}_{r}\left(\alpha\right)/\alpha}.
\end{align*}
Taking the infimum over $\alpha\in\underline{\mathscr{A}}$ yields
\begin{align*}
\overline{D}_{r}\left(\nu\right) & \leq\inf_{\alpha\in\underline{\mathscr{A}}}\frac{r\underline{F}_{r}\left(\alpha\right)/\alpha}{1-\underline{F}_{r}\left(\alpha\right)/\alpha}=\frac{r\sup_{\alpha>0}\underline{F}_{r}\left(\alpha\right)/\alpha}{1-\sup_{\alpha>0}\underline{F}_{r}\left(\alpha\right)/\alpha}=\frac{r\underline{F}_{r}}{1-\underline{F}_{r}}.
\end{align*}
\end{proof}

\subsection{Lower bounds \label{subsec:Lower-bounds}}

In this section we give the proof of the following lower bounds.
\begin{prop}
\label{prop:LowerBound} If $r\in\left(-\dim_{\infty}\left(\nu\right),0\right)$,
then $q_{r}>1$ and we have 
\[
\frac{rq_{r}}{1-q_{r}}\leq\underline{D}_{r}\left(\nu\right)\;\text{and }\;\lim_{\varepsilon\downarrow0}\frac{r\underline{h}_{r-\varepsilon}}{1-\underline{h}_{r-\varepsilon}}\leq\overline{D}_{r}\left(\nu\right).
\]
\end{prop}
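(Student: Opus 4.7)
Since $1/r<0$, the quantization error for $r\in(-\dim_\infty(\nu),0)$ can be rewritten as $\er_{n,r}(\nu)^r=\sup_{A\in\mathcal{A}_n}I(A)$ with $I(A)\coloneqq\int d(\,\cdot\,,A)^r\d\nu$, so a lower bound on $\er_{n,r}(\nu)$ amounts to a \emph{uniform} upper bound on $I(A)$. My plan has three parts: show $q_r>1$; prove a dual-partition inequality of the shape $\er_{n,r}(\nu)^r\leq C_{d,r,\epsilon}\,n\,\gamma_{\J_{\nu,r-\epsilon},K_d n}$ for each small $\epsilon>0$; and translate this into the two stated lower bounds via \prettyref{prop:GeneralResultOnPartitionFunction}.

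For the first part, $\J_{\nu,r}(Q)\geq\nu(Q)\Lambda(Q)^{r/d}=\nu(Q)\,2^{-nr}$ for $Q\in\mathcal{D}_n$, and summing over $\mathcal{D}_n$ gives $\tau_{\J_{\nu,r},n}(1)\geq -r$, hence $\tau_{\J}(1)\geq -r>0$. Together with the convexity of $\tau_{\J}$ and its negative asymptotic slope $-r-\dim_\infty(\nu)<0$ (as noted just before \prettyref{cor:GeometricBounds}), the zero $q_r$ of $\tau_{\J}$ lies strictly to the right of $1$.

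The heart of the proof is the second part. Fix $\epsilon\in(0,r+\dim_\infty(\nu))$ and $A\in\mathcal{A}_n$. To each $a\in A$ associate a dyadic cube $Q_a\ni a$ of sidelength comparable to $d(a,A\setminus\{a\})$; the cubes $\{Q_a\}_{a\in A}$, completed by the maximal dyadic cubes in $\Q\setminus\bigcup_a Q_a$ (which are automatically disjoint from $A$), form a partition $P$ of $\Q$ with $\card(P)\leq K_d n$. Each ``far'' cube $Q\in P$ with $Q\cap A=\emptyset$ satisfies $d(\,\cdot\,,A)\gtrsim\diam(Q)$ on $Q$, so its contribution to $I(A)$ is bounded by $C_d\,\nu(Q)\Lambda(Q)^{r/d}\leq C_d\,\J_{\nu,r-\epsilon}(Q)$. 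Each ``near'' cube $Q_a$ is decomposed into dyadic annular shells $S_{a,k}=\{x\in Q_a:2^{-k-1}<d(x,a)\leq 2^{-k}\}$; covering $B_{2^{-k}}(a)$ by $O_d(1)$ cubes of $\mathcal{D}_k$ and factoring $2^{-kr}=2^{-k(r-\epsilon)}2^{-k\epsilon}$ yields
\[
\int_{S_{a,k}}d(x,a)^r\d\nu\leq C'_d\,2^{-k\epsilon}\,\J_{\nu,r-\epsilon}(Q_a),
\]
so the geometric series in $k$ sums to at most $C_{d,r,\epsilon}\,\J_{\nu,r-\epsilon}(Q_a)$. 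Summing over all $Q\in P$ and using $\max_{Q\in P}\J_{\nu,r-\epsilon}(Q)\geq\gamma_{\J_{\nu,r-\epsilon},K_d n}$ gives the announced inequality uniformly in $A$.

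For the third part, taking logarithms of $\er_{n,r}(\nu)^r\leq C\,n\,\gamma_{\J_{\nu,r-\epsilon},K_d n}$ and dividing by $r<0$ yields $-\log\er_{n,r}(\nu)\leq(\log C+\log n+\log\gamma_{\J_{\nu,r-\epsilon},K_d n})/(-r)$. Since, by \prettyref{prop:GeneralResultOnPartitionFunction}, $\limsup_n\log\gamma_{\J_{\nu,r-\epsilon},n}/\log n=-1/q_{r-\epsilon}$ and $\liminf_n\log\gamma_{\J_{\nu,r-\epsilon},n}/\log n=-1/\underline{h}_{\nu,r-\epsilon}$, taking $\liminf$ and $\limsup$ in $n$ and then letting $\epsilon\downarrow 0$ (with $q_{r-\epsilon}\to q_r$ by continuity of the zero of the convex function $\tau_{\J}$) produces both inequalities of the proposition. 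The principal obstacle is step two, in particular the shell estimate together with the count $\card(P)\leq K_d n$: the shift from $r$ to $r-\epsilon$ is exactly what turns the sum over annuli into a convergent geometric series with a constant independent of the scale of $Q_a$, which is why the second inequality is stated as a limit in $\epsilon$.
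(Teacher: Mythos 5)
Your overall strategy --- rewrite $\er_{n,r}(\nu)^{r}=\sup_{A\in\mathcal{A}_{n}}\int d(\cdot,A)^{r}\d\nu$, prove a uniform bound of the form $\sup_{A\in\mathcal{A}_{n}}\int d(\cdot,A)^{r}\d\nu\leq C\,n\,\gamma_{\J_{\nu,r-\epsilon},O(n)}$, and convert via \prettyref{prop:GeneralResultOnPartitionFunction} --- is exactly the right one, and your argument for $q_{r}>1$ is fine. But the central step has a genuine gap, and it is a direction-of-inequality problem at the crux. You build a partition $P$ \emph{adapted to $A$} and arrive at $\int d(\cdot,A)^{r}\d\nu\leq C\sum_{Q\in P}\J_{\nu,r-\epsilon}(Q)\leq C\card(P)\max_{Q\in P}\J_{\nu,r-\epsilon}(Q)$. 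To land on $C\,n\,\gamma_{\J_{\nu,r-\epsilon},K_{d}n}$ you would need $\max_{Q\in P}\J_{\nu,r-\epsilon}(Q)\lesssim\gamma_{\J_{\nu,r-\epsilon},K_{d}n}$; since $\gamma_{\J,m}$ is an \emph{infimum} over partitions, your $A$-adapted partition only satisfies the reverse inequality $\max_{Q\in P}\J_{\nu,r-\epsilon}(Q)\geq\gamma_{\J_{\nu,r-\epsilon},K_{d}n}$, which is the one you invoke and which is useless for an upper bound on $\int d(\cdot,A)^{r}\d\nu$. The paper resolves this by going the other way around: it fixes the \emph{$\J_{\nu,r-\epsilon}$-optimal} partition $P$ realizing $t_{k}=\gamma_{\J_{\nu,r-\epsilon},k}$ (so that $\J_{\nu,r-\epsilon}(\tilde{Q})\leq t_{k}$ for every $\tilde{Q}\in P$ by construction, independently of $A$), and then controls the interaction with an arbitrary $A\in\mathcal{A}_{k}$ by a stopping-time decomposition into the first-exit cubes $E_{n}(\tilde{Q})$, whose total count over all scales is bounded by $O(6^{d}\cdot3^{d}\card(A))$ via two counting lemmas, with the factor $\Lambda(Q)^{\epsilon/d}=2^{-n\epsilon}$ supplying the convergent geometric series (this is where the shift $r\to r-\epsilon$ enters, just as you anticipated).

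A second, related defect is geometric: for your $A$-adapted partition the claim that every complementary ("far") cube $Q$ satisfies $d(\cdot,A)\gtrsim\diam(Q)$ on $Q$ is false. A maximal dyadic cube $Q$ of $\Q\setminus\bigcup_{a}Q_{a}$ can abut a tiny $Q_{a}$ whose point $a$ lies arbitrarily close to the common boundary, so $d(x,A)$ is not bounded below by a multiple of $\diam(Q)$ on all of $Q$, and with $r<0$ the integral over $Q$ is then not controlled by $\nu(Q)\Lambda(Q)^{r/d}$. Likewise, in your shell estimate the $O_{d}(1)$ dyadic cubes covering $B_{2^{-k}}(a)$ need not be subcubes of $Q_{a}$, so their contribution is not dominated by $\J_{\nu,r-\epsilon}(Q_{a})$ but by the $\J$-value of neighbouring partition elements --- which brings you back to the uncontrolled maximum above. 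Repairing both points essentially forces the multi-scale, neighbourhood-counting bookkeeping of \prettyref{lem:Combinatoric1} and \prettyref{lem:Combinatoric2} applied to an $A$-independent optimal partition, i.e.\ the paper's proof.
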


For convenient, in this section we choose the maximum norm on $\R^{d}$.
For any $Q\in\mathcal{D}$, we let $\left|Q\right|$ denote the side
length of $Q$. Before we proceed, we need the following two elementary
lemmas. For $\tilde{Q}\in\mathcal{D}$ we let $B_{n}\left(\tilde{Q}\right)\coloneqq\bigcup\left\{ Q\in\mathcal{D}_{n-1}:\overline{\tilde{Q}}\cap\overline{Q}\neq\emptyset\right\} $
denote the $2^{-n+1}$-parallel set of $\tilde{Q}$.
\begin{lem}
\label{lem:Combinatoric1}For $\tilde{Q}\in\mathcal{D}$, a finite
set $A\subset\R^{d}$, and an integer $n>\left|\log_{2}\left|\tilde{Q}\right|\right|$,
we have
\begin{align*}
\card\left\{ Q\in\mathcal{D}_{n}\left(\tilde{Q}\right):d\left(Q,A\right)\geq\left|Q\right|\&\forall Q'\in\mathcal{D}_{n-1}\left(\tilde{Q}\right):Q'\supset Q,d\left(Q',A\right)<\left|Q'\right|\right\} \\
\leq6^{d}\card\left(A\cap B_{n}\left(\tilde{Q}\right)\right).
\end{align*}
\end{lem}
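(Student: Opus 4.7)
The plan is to associate to each cube $Q$ in the set on the left-hand side a point $a(Q)\in A\cap B_{n}(\tilde{Q})$ (reading $B_n(\tilde{Q})$ as the union of its constituent cubes), and then to bound the size of each preimage fibre under $Q\mapsto a(Q)$ by $6^{d}$.

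For each such $Q$, let $Q'\in\mathcal{D}_{n-1}$ be its unique parent; the assumption $n>\left|\log_{2}\left|\tilde{Q}\right|\right|$ ensures $Q'\subset\tilde{Q}$, and the defining condition on $Q$ gives $d(Q',A)<\left|Q'\right|=2^{-n+1}$. Since $A$ is finite, I would choose $a(Q)\in A$ with $d(a(Q),Q')<2^{-n+1}$ and let $R=R(Q)\in\mathcal{D}_{n-1}$ denote the unique level-$(n-1)$ cube containing $a(Q)$.

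Working with the maximum norm, the key rigidity observation is $d(R,Q')\leq d(a(Q),Q')<2^{-n+1}$. Because $R$ and $Q'$ are dyadic cubes of common side length $2^{-n+1}$, their one-dimensional projections onto each coordinate axis are dyadic intervals of that same length aligned with the common grid, and the strict inequality forces the integer indices of these projections to differ by at most one in every direction. Hence $\bar{R}\cap\bar{Q'}\neq\emptyset$, and combined with $Q'\subset\tilde{Q}$ this yields $\bar{R}\cap\bar{\tilde{Q}}\neq\emptyset$, so $R\in B_{n}(\tilde{Q})$ and therefore $a(Q)\in A\cap B_{n}(\tilde{Q})$.

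For the counting step, I would fix $a\in A\cap B_{n}(\tilde{Q})$ and let $R$ be its containing level-$(n-1)$ cube. Any $Q$ with $a(Q)=a$ has parent $Q'$ whose index vector differs from that of $R$ by at most one in each coordinate (by the argument above applied in reverse), giving at most $3^{d}$ candidates for $Q'$; each such $Q'$ has exactly $2^{d}$ children in $\mathcal{D}_{n}$, yielding at most $3^{d}\cdot2^{d}=6^{d}$ cubes $Q$ mapping to $a$. Summing over $a\in A\cap B_{n}(\tilde{Q})$ finishes the proof. The main obstacle is the rigidity step in the third paragraph: one must exploit the dyadic alignment of $R$ and $Q'$ at the common level $n-1$ together with strictness of the inequality to conclude closure-intersection; this is where the choice of the maximum norm becomes essential, since the clean constant $3^{d}$ for the one-step dyadic neighbourhood (and hence $6^{d}$ overall) hinges on this alignment.
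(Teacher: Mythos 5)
Your proposal is correct and follows essentially the same route as the paper: both arguments reduce to the observation that, in the maximum norm, a point within distance $2^{-n+1}$ of a level-$(n-1)$ parent cube must lie in one of its $3^{d}$ grid neighbours at that level, each contributing at most $2^{d}$ children, for the constant $6^{d}$. The only difference is presentational — you count fibres of a map $Q\mapsto a(Q)$ into $A\cap B_{n}(\tilde{Q})$, whereas the paper sums the single-point bound $6^{d}\1_{B_{n}(\tilde{Q})}(a)$ over $a\in A$ — which is the same double counting read in the other direction.
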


\begin{proof}
First we show that for any $a\in\R^{d}$ we have 
\begin{align*}
\card\left\{ Q\in\mathcal{D}_{n}\left(\tilde{Q}\right):d\left(Q,a\right)\geq\left|Q\right|\&\forall Q'\in\mathcal{D}_{n-1}\left(\tilde{Q}\right):Q'\supset Q,d\left(Q',a\right)<\left|Q'\right|\right\} \\
\leq6^{d}\1_{B_{n}\left(\tilde{Q}\right)}\left(a\right).
\end{align*}
The case $a\notin B_{n}\left(\tilde{Q}\right)$ follows by observing
that $n>\left|\log_{2}\left|\tilde{Q}\right|\right|$ implies that
for every $Q\in\mathcal{D}_{n}\left(\tilde{Q}\right)$, there exists
$Q'\in\mathcal{D}_{n-1}\left(\tilde{Q}\right)$ with $Q'\supset Q$
and $d\left(Q',a\right)\geq d\left(\tilde{Q},a\right)\geq2^{-n+1}=\left|Q'\right|$
and the relevant set is therefore empty.

For the case $a\in B_{n}\left(\tilde{Q}\right)$, observe that we
have at most $3^{d}$ cubes $Q'\in\mathcal{D}_{n-1}\left(\tilde{Q}\right)$
such that $d\left(Q',a\right)<\left|Q'\right|$ and in each such cube
there are at most $2^{d}$ subcubes from $\mathcal{D}_{n}$. This
gives the upper bound for the cardinality in question of $2^{d}\cdot3^{d}$.

Now the claim follows from 
\begin{align*}
 & \card\left\{ Q\in\mathcal{D}_{n}\left(\tilde{Q}\right):d\left(Q,A\right)\geq\left|Q\right|\&\forall Q'\in\mathcal{D}_{n-1}\left(\tilde{Q}\right):Q'\supset Q,d\left(Q',A\right)<\left|Q'\right|\right\} \\
 & \leq\sum_{a\in A}\card\left\{ Q\in\mathcal{D}_{n}\left(\tilde{Q}\right):d\left(Q,a\right)\geq\left|Q\right|\&\forall Q'\in\mathcal{D}_{n-1}\left(\tilde{Q}\right):Q'\supset Q,d\left(Q',a\right)<\left|Q'\right|\right\} .
\end{align*}
\end{proof}
\begin{lem}
\label{lem:Combinatoric2} For $A\subset\R^{d}$ and $P\subset\bigcup_{k=1}^{n-1}\mathcal{D}_{k}$,
$n\in\N$ a finite disjoint family of sets we have
\[
\sum_{\tilde{Q}\in P}\card\left(A\cap B_{n}\left(\tilde{Q}\right)\right)\leq3^{d}\card\left(A\right).
\]
\end{lem}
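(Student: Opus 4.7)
The plan is to swap the order of summation and then, for each fixed $a\in A$, bound the number of $\tilde{Q}\in P$ whose associated parallel set contains $a$ by $3^{d}$. First I would write
\[
\sum_{\tilde{Q}\in P}\card\left(A\cap B_{n}(\tilde{Q})\right)=\sum_{a\in A}\card\left\{\tilde{Q}\in P:a\in\bigcup B_{n}(\tilde{Q})\right\},
\]
and note that whenever $a$ lies in some $\bigcup B_{n}(\tilde{Q})$ there is a unique cube $Q_{a}\in\mathcal{D}_{n-1}$ with $a\in Q_{a}$ (since $\mathcal{D}_{n-1}$ partitions $\Q$), and the condition on $\tilde{Q}$ then reduces to $\overline{Q_{a}}\cap\overline{\tilde{Q}}\neq\emptyset$. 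It thus suffices to establish, for every fixed $Q_{a}\in\mathcal{D}_{n-1}$, the pointwise bound
\[
\card\left\{\tilde{Q}\in P:\overline{Q_{a}}\cap\overline{\tilde{Q}}\neq\emptyset\right\}\leq 3^{d}.
\]

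For this, I would introduce the neighbouring family $N(Q_{a})\coloneqq\{Q\in\mathcal{D}_{n-1}:\overline{Q}\cap\overline{Q_{a}}\neq\emptyset\}$, whose cardinality is elementarily at most $3^{d}$, and construct an injection $\varphi$ from the set above into $N(Q_{a})$. Every $\tilde{Q}\in P$ sits at some level $k\leq n-1$ and therefore decomposes as a union of level-$(n-1)$ cubes from $\mathcal{D}_{n-1}$; if moreover $\overline{\tilde{Q}}\cap\overline{Q_{a}}\neq\emptyset$, then at least one such sub-cube has its closure meeting $\overline{Q_{a}}$, and I set $\varphi(\tilde{Q})$ equal to any such sub-cube (say, the lexicographically first) to make the assignment well defined. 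By construction $\varphi(\tilde{Q})\in N(Q_{a})$.

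The only nontrivial point, and the main (though mild) obstacle, is the injectivity of $\varphi$. Here the disjointness hypothesis on $P$ is essential: if $\varphi(\tilde{Q}_{1})=\varphi(\tilde{Q}_{2})=Q^{\ast}$ for distinct $\tilde{Q}_{1},\tilde{Q}_{2}\in P$, then the nonempty dyadic cube $Q^{\ast}$ would lie in $\tilde{Q}_{1}\cap\tilde{Q}_{2}$, contradicting the assumption that the family $P$ is pairwise disjoint. Summing the resulting pointwise bound over $a\in A$ then yields the desired estimate.
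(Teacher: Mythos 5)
Your argument is correct and follows essentially the same route as the paper: both proofs swap the order of summation so that one counts, for each fixed level-$(n-1)$ cube, the number of cubes of $P$ whose closures meet it, and both bound that count by $3^{d}$ using the disjointness of $P$. The only difference is that you make explicit (via the injection $\varphi$ into the at most $3^{d}$ neighbours of $Q_{a}$ in $\mathcal{D}_{n-1}$) the counting step that the paper asserts by comparing $P$ with $\mathcal{D}_{n-1}$, which is a welcome clarification rather than a deviation.
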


\begin{proof}
Since for all $\tilde{Q}\in P$ we have $\left|\tilde{Q}\right|\geq2^{-n+1}$
we find
\begin{align*}
\sum_{\tilde{Q}\in P}\card\left(A\cap B_{n}\left(\tilde{Q}\right)\right) & =\sum_{\tilde{Q}\in P}\sum_{Q\in\D_{n-1},\bar{Q}\cap\bar{\tilde{Q}}\neq\emptyset}\card\left(A\cap Q\right)\\
 & =\sum_{Q\in\D_{n-1}}\sum_{\tilde{Q}\in P,\bar{Q}\cap\bar{\tilde{Q}}\neq\emptyset}\card\left(A\cap Q\right)\\
 & \leq\sum_{Q\in\D_{n-1}}\sum_{\tilde{Q}\in\mathcal{D}_{n-1},\bar{Q}\cap\bar{\tilde{Q}}\neq\emptyset}\card\left(A\cap Q\right)\\
 & \leq\sum_{Q\in\D_{n-1}}3^{d}\card\left(A\cap Q\right)=3^{d}\card\left(A\right).
\end{align*}
\end{proof}
\begin{proof}[Proof of \prettyref{prop:LowerBound}]
 For $k\in\N$ choose $A\in\mathcal{A}_{k}$, and $\epsilon>0$ such
that $r-\epsilon>-\dim_{\infty}\left(\nu\right)$. Let $t_{k}\coloneqq\gamma_{\J_{\nu,r-\epsilon},k}$
be as in \prettyref{prop:GeneralResultOnPartitionFunction} and $P\in\Pi$
an optimal partition realizing $\gamma_{\J_{\nu,r-\epsilon},k}$ that
is $\card\left(P\right)\leq k$ and $\J_{\nu,r-\epsilon}$$\left(\tilde{Q}\right)\leq t_{k}$
for all $\tilde{Q}\in P$ (see \cite{MR4882804}). Let us define 
\[
P_{1}\coloneqq\left\{ \tilde{Q}\in P,d\left(\tilde{Q},A\right)\geq\left|\tilde{Q}\right|\right\} \,\text{and }P_{2}\coloneqq P\setminus P_{1}.
\]
This allows us to estimate

\begin{align*}
\int d(x,A)^{r}\d\nu(x) & =\sum_{\tilde{Q}\in P_{1}}\int_{\tilde{Q}}d(x,A)^{r}\d\nu(x)+\sum_{\tilde{Q}\in P_{2}}\int_{\tilde{Q}}d(x,A)^{r}\d\nu(x)\\
 & \leq\sum_{\tilde{Q}\in P_{1}}\underbrace{\Lambda\left(\tilde{Q}\right)^{r/d}\nu\left(\tilde{Q}\right)}_{\leq t_{k}}+\sum_{\tilde{Q}\in P_{2}}\int_{\tilde{Q}}d(x,A)^{r}\d\nu(x)\\
 & \leq t_{k}\card(P)+\sum_{\tilde{Q}\in P_{2}}\int_{\tilde{Q}}d(x,A)^{r}\d\nu(x).
\end{align*}
Further, for $n\in\N$, and $\tilde{Q}\in P_{2}$ we set 
\begin{align*}
E_{n}\left(\tilde{Q}\right) & \coloneqq\left\{ Q\in\mathcal{D}_{n}\left(\tilde{Q}\right):d\left(Q,A\right)\geq\left|Q\right|\&\forall Q'\in\mathcal{D}_{n-1}\left(\tilde{Q}\right):Q'\supset Q,d\left(Q',A\right)<\left|Q'\right|\right\} .
\end{align*}
Note that for $2^{-n}>\left|\tilde{Q}\right|$ we have $\mathcal{D}_{n}\left(\tilde{Q}\right)=\emptyset$
and therefore $E_{n}\left(\tilde{Q}\right)=\emptyset$. For $2^{-n}<\left|\tilde{Q}\right|$
and $Q\in\mathcal{D}_{n}\left(\tilde{Q}\right)$ there is exactly
one $Q'\in\mathcal{D}_{n-1}\left(\tilde{Q}\right)$ with $Q'\supset Q$.
Finally, if $d\left(\tilde{Q},A\right)<\left|\tilde{Q}\right|=2^{-n}$
we again have $E_{n}\left(\tilde{Q}\right)=\emptyset$. Since $\nu$
has no atoms,
\[
\bigcup_{n\in\N}\bigcup_{\tilde{Q}\in P_{2}}E_{n}\left(\tilde{Q}\right)=\bigcup_{\tilde{Q}\in P_{2}}\tilde{Q}\setminus A,
\]
i.\,e\@. $\left(E_{n}\left(\tilde{Q}\right)\right)_{n\in\N,\tilde{Q}\in P_{2}}$
is a countable $\nu$-almost sure infinite partition of \emph{$\bigcup_{\tilde{Q}\in P_{2}}\tilde{Q}$}.
With this at hand, we find for the second summand in the above estimate
\begin{align}
\sum_{\tilde{Q}\in P_{2}}\int d(x,A)^{r}\d\nu(x) & \leq\sum_{n=0}^{\infty}\sum_{\tilde{Q}\in P_{2}}\sum_{Q\in E_{n}\left(\tilde{Q}\right)}\int_{Q}d\left(x,A\right)^{r}\d\nu(x)\nonumber \\
 & \leq\sum_{n=0}^{\infty}\sum_{\tilde{Q}\in P_{2}}\sum_{Q\in E_{n}\left(\tilde{Q}\right)}\underbrace{\Lambda\left(Q\right)^{\left(r/d-\epsilon/d\right)}\nu\left(Q\right)}_{\leq t_{k}}\underbrace{\Lambda\left(Q\right)^{\epsilon/d}}_{=2^{-n\epsilon}}\nonumber \\
 & \leq t_{k}\sum_{n=0}^{\infty}2^{-\epsilon n}\sum_{\tilde{Q}\in P_{2},\left|\tilde{Q}\right|>2^{-n}}6^{d}\card\left(A\cap B_{n}\left(\tilde{Q}\right)\right)\nonumber \\
 & \leq t_{k}\sum_{n=0}^{\infty}2^{-\epsilon n}6^{d}\sum_{\tilde{Q}\in P_{2},\left|\tilde{Q}\right|>2^{-n}}\card\left(A\cap B_{n}\left(\tilde{Q}\right)\right)\nonumber \\
 & \leq\frac{18^{d}}{1-2^{-\epsilon}}t_{k}\card\left(A\right),\label{eq:upperbound_e_r}
\end{align}
where for the third inequality we used \prettyref{lem:Combinatoric1}
and for the fifth inequality we used \prettyref{lem:Combinatoric2}.
Combining the above then gives
\[
\int d(x,A)^{r}\d\nu(x)\leq\left(1+\frac{18^{d}}{1-2^{-\epsilon}}\right)t_{k}k.
\]
With \prettyref{eq:upperbound_e_r} this gives
\[
\er_{r,k}\left(\nu\right)\geq\left(\left(1+\frac{18^{d}}{1-2^{-\epsilon}}\right)t_{k}k\right)^{1/r}.
\]
Now, using $q_{r-\varepsilon}+\epsilon\geq\log\left(n\right)/\left(-\log\left(t_{n}\right)\right)$
for all $n$ large and \prettyref{prop:GeneralResultOnPartitionFunction},
gives
\[
\underline{D}_{r}\left(\nu\right)\geq\liminf_{k\to\infty}\frac{r\log k}{-\log k-\log t_{k}}=\frac{r\limsup_{k\to\infty}\log\left(k\right)/\log\left(1/t_{k}\right)}{1-\limsup_{k\to\infty}\log\left(k\right)/\log\left(1/t_{k}\right)}\geq\frac{r\left(q_{r-\varepsilon}+\epsilon\right)}{1-\left(q_{r-\varepsilon}+\epsilon\right)}.
\]
Letting $\epsilon$ tend to zero and by the continuity of $a\mapsto q_{a}$,
$a\in(-\dim_{\infty}\left(\nu\right),0)$ the first inequality follows.

For the second inequality, recall that by our assumption and \prettyref{prop:GeneralResultOnPartitionFunction}
we have $1<\underline{F}_{\J}\leq\underline{h}_{\J}$. For the second
inequality we consider $P\in\Pi$ with $\card\left(P\right)=\mathcal{M}_{\nu,r-\epsilon}\left(1/t\right)$.
This time \prettyref{eq:upperbound_e_r} gives
\[
\er_{r,\mathcal{M}_{\nu,r-\epsilon}\left(1/t\right)}\left(\nu\right)\geq\left(\frac{18^{d}}{1-2^{-\epsilon}}t\mathcal{M}_{\nu,r-\epsilon}\left(1/t\right)\right)^{1/r}
\]
and hence using $\underline{h}_{r-\epsilon}+\epsilon\geq\log\left(\mathcal{M}_{\nu,r-\epsilon}\left(1/t_{n}\right)\right)/\log\left(1/t_{n}\right)$
on a subsequence $t_{n}\searrow0$,
\begin{align*}
\overline{D}_{r}\left(\nu\right) & \geq\limsup_{t\to0}\frac{r\log\mathcal{M}_{\nu,r-\epsilon}\left(1/t\right)}{-\log\mathcal{M}_{\nu,r-\epsilon}\left(1/t\right)+\log\left(1/t\right)}=\frac{r\liminf\log\left(n\right)/\log\left(1/t_{n}\right)}{1-\liminf\log\left(n\right)/\log\left(1/t_{n}\right)}\\
 & \geq\frac{r\left(\underline{h}_{r-\epsilon}+\epsilon\right)}{1-\left(\underline{h}_{r-\epsilon}+\epsilon\right)}.
\end{align*}
\end{proof}
\begin{proof}[Proof of \prettyref{lem:GeometricBounds}]
 First note that $a_{\nu}\coloneqq\sup\left\{ q_{r}:r\in\left(-\dim_{\infty}\left(\nu\right),0\right)\right\} >1,$
due to the fact that $q_{0}=1$ and $r\mapsto q_{r}$ is strictly
decreasing on $\left(-\dim_{\infty}\left(\nu\right),0\right]$. Since
$r\mapsto q_{r}$ is decreasing we have 
\[
\lim_{r\searrow-\dim_{\infty}\left(\nu\right)}q_{r}=\sup\left\{ q_{r}:r\in\left(-\dim_{\infty}\left(\nu\right),0\right)\right\} =a_{\nu},
\]
and therefore 
\[
\lim_{r\searrow-\dim_{\infty}\left(\nu\right)}\underline{D}_{r}\left(\nu\right)=\begin{cases}
{\displaystyle \lim_{r\searrow-\dim_{\infty}\left(\nu\right)}r\frac{q_{r}}{1-q_{r}}=\frac{a_{\nu}}{a_{\nu}-1}\dim_{\infty}\left(\nu\right)}\vphantom{\frac{df\frac{G}{sdG}}{\frac{dfG}{dfG\frac{d}{d}}}}, & a_{\nu}<\infty,\\
{\displaystyle \lim_{r\searrow-\dim_{\infty}\left(\nu\right)}\frac{r}{1/q_{r}-1}=\dim_{\infty}\left(\nu\right),} & a_{\nu}=\infty.
\end{cases}
\]
\end{proof}

\subsection{Quantization via geometric mean error}

In this subsection we settle the quantziation problem for the geometric
mean error as stateted in \prettyref{thm:Main} \prettyref{enu:3}.
\begin{proof}[Proof of \prettyref{thm:Main} \prettyref{enu:3}]
 Part \prettyref{enu:3} follows by combining \prettyref{lem:monontonicity}
with \cite{MR2954531}, \cite{MR2334791} and our results from part
\prettyref{enu:2}, by noting, on the one hand,
\begin{equation}
\frac{-1}{\partial_{r}q_{r}|_{r=0}}=\lim_{r\uparrow0}\frac{r}{1-q_{r}}q_{r}\leq\lim_{r\uparrow0}\frac{rq_{r}}{1-q_{r}}=\lim_{r\uparrow0}\underline{D}_{r}\left(\nu\right)\leq\underline{D}_{0}\left(\nu\right),\label{eq:-1/q_r'<D_o}
\end{equation}
where we used $\lim_{r\uparrow0}q_{r}=1$. Using $\beta_{\nu}\left(q_{r}\right)\leq\tau_{\nu,r}\left(q_{r}\right)+rq_{r}=rq_{r}$,
this also shows 
\begin{equation}
\frac{-1}{\partial_{r}q_{r}|_{r=0}}\leq\lim_{r\uparrow0}\frac{\beta_{\nu}\left(q_{r}\right)}{1-q_{r}}=-\beta'_{\nu}\left(1+\right).\label{eq:-1/q_r'<beta'}
\end{equation}
On the other hand, using $\beta_{\nu}\left(q_{r}\right)=rq_{r}$ for
$r>0$, as established in \cite{KN22b} and the monotonicity of $r\mapsto\overline{D}_{r}\left(\nu\right)$
on $r\geq0$ obtained e.\,g\@. in \cite[Lemma 3.5]{MR2055056},
we have that 
\begin{align}
\overline{D}_{0}\left(\nu\right) & \leq\lim_{r\downarrow0}\overline{D}_{r}\left(\nu\right)=\lim_{r\downarrow0}\frac{\beta_{\nu}(q_{r})-\beta_{\nu}(1)}{1-q_{r}}=-\beta_{\nu}'\left(1-\right)\nonumber \\
 & =\lim_{r\downarrow0}\frac{r}{1-q_{r}}q_{r}=\frac{-1}{\partial_{r}q_{r}|_{r=0}}.\label{eq:D_0>-1/q_r'}
\end{align}
Combining $-\beta_{\nu}'\left(1+\right)\leq-\beta_{\nu}'\left(1-\right)$,
\ref{eq:-1/q_r'<beta'}, and \ref{eq:D_0>-1/q_r'} shows that $\beta_{\nu}$
is differentiable at $1$ with $\beta_{\nu}'\left(1\right)=1/\partial_{r}q_{r}|_{r=0}$.
Combining \ref{eq:-1/q_r'<D_o} and \ref{eq:D_0>-1/q_r'} $-1/\partial_{r}q_{r}|_{r=0}\leq\underline{D}_{0}\left(\nu\right)\leq\overline{D}_{0}\left(\nu\right)\leq-1/\partial_{r}q_{r}|_{r=0}$.
This proves the claim.
\end{proof}

\subsection{Absolutely continuous case\label{subsec:Absolutely-continuous-case}}

We start with the following lemma, which has been stated in a similar
context in \cite[Lemma 3.15]{KN2022b}.
\begin{lem}
\label{lem:AbsolLq}Let $\nu$ be a non-zero absolutely continuous
measure with Lebesgue density $f\in L_{\Lambda}^{s}$ for some $s\geq1$.
Then, for all $q\in\left[0,s\right]$, the $L^{q}$-spectrum is linear
with $\liminf_{n\rightarrow\infty}\beta_{\nu,n}(q)=\beta_{\nu}\left(q\right)=d(1-q)$
and $\tau_{\J_{\nu,r}}(q)=\beta_{\nu}(q)-rq$ for $r\geq d/s-d$.
\end{lem}

\begin{proof}
First, we remark that, $\beta_{\nu}(1)=0$ and $\beta_{\nu}(0)=d$.
Hence, the convexity of $\beta_{\nu}$ implies $\beta_{\nu}(q)\leq d(1-q)$
for all $q\in[0,1]$ and $\beta_{\nu}(q)\geq d(1-q)$ for $q>1$.
Moreover, by Jensen's inequality, for all $q\in[0,1]$ and $n$ large,
we have 
\[
\sum_{Q\in\mathcal{D}_{n}}\nu(Q)^{q}=\sum_{Q\in\mathcal{D}_{n}}\left(\frac{\int_{Q}f\d\Lambda}{\Lambda(Q)}\right)^{q}\Lambda(Q)^{q}\geq\sum_{Q\in\mathcal{D}_{n}}\Lambda(Q)^{q-1}\int_{Q}f^{q}\d\Lambda\geq\Lambda(Q)^{q-1}\int_{\Q}f^{q}\d\Lambda,
\]
 implying $\liminf_{n\rightarrow\infty}\beta_{\nu,n}(q)\geq d(1-q).$
Further, Jensen's inequality, for all $q\in[1,s]$, yields 
\[
\sum_{Q\in\mathcal{D}_{n}}\nu(Q)^{q}=\sum_{Q\in\mathcal{D}_{n}}\left(\frac{\int_{Q}f\d\Lambda}{\Lambda(Q)}\right)^{q}\Lambda(Q)^{q}\leq\Lambda(Q)^{q-1}\sum_{Q\in\mathcal{D}_{n}}\int_{Q}f^{q}\d\Lambda\leq\Lambda(Q)^{q-1}\int_{\Q}f^{q}\d\Lambda.
\]
 Hence, we obtain $\limsup_{n\rightarrow\infty}\beta_{\nu,n}(q)\leq d(1-q).$
To prove the last equality we again use Jensen's inequality, for $Q\in\D_{n}$,
$r\geq d/s-d$ and $q=s$: 
\[
\nu(Q)^{q}=\left(\int_{Q}f\Lambda(Q)^{-1}\d\Lambda\right)^{q}\Lambda(Q)^{q}\leq\left(\int_{Q}f^{q}\d\Lambda\right)\Lambda(Q)^{q-1},
\]
implying $\nu(Q)^{q}\Lambda(Q)^{qr/d}\leq\left(\int_{Q}f^{q}\d\Lambda\right)\Lambda(Q)^{q(1+r/d)-1}$.
Note that 
\[
q(1+r/d)-1=\left(\frac{d+r}{d}\right)q-1\geq\frac{q}{s}-1=0
\]
and since have that $Q\mapsto\left(\int_{Q}f^{q}\d\Lambda\right)\Lambda(Q)^{q(1+r/d)-1}$
is monotonic. Therefore, we get the following upper bound:

\begin{align*}
\sum_{Q\in\D_{n}}\max_{Q'\in\mathcal{D}\left(Q\right)}\nu\left(Q'\right)\Lambda\left(Q'\right)^{r} & \leq\sum_{Q\in\D_{n}}\left(\int_{Q}f^{q}\d\Lambda\right)\Lambda(Q)^{q(1+r/d)-1}\\
 & \leq\left\Vert f\right\Vert _{L_{\Lambda}^{q}(\Q)}^{q}2^{-q(d+r)+d}
\end{align*}
showing 
\[
\tau_{\J_{\nu,r}}(q)\leq d(1-q)-rq.
\]
Further, $\tau_{\J_{\nu,r}}(0)\leq d$ combined with the convexity
of $\tau_{\J_{\nu,r}}$, we conclude that for all $q\in[0,s]$
\[
\tau_{\J_{\nu,r}}(q)\leq d(1-q)-rq.
\]
The lower bound follows from 
\[
d(1-q)-rq\leq\liminf_{n\rightarrow\infty}\beta_{\nu,n}(q)-rq\leq\liminf_{n\rightarrow\infty}\tau_{\J_{\nu,r},n}(q)\leq\tau_{\J_{\nu,r}}(q).
\]
\end{proof}
We also need the following easy observation.
\begin{lem}
\label{lem:lowerboudonBeta} For any compactly supported probability
measure $\nu$ we have for $q\geq0$,
\[
\beta_{\nu}\left(q\right)\geq-\dim_{\infty}\left(\nu\right)\cdot q.
\]
\end{lem}

\begin{proof}
By the definition of $\dim_{\infty}\left(\nu\right)$ we have that
for every $\epsilon>0$ there exists infinitely many $n\in\N$ with
$\max_{Q\in\mathcal{D}_{n}}\nu\left(Q\right)\geq2^{\left(-\dim_{\infty}\left(\nu\right)-\epsilon\right)n}$
and therefore
\[
\limsup_{n\to\infty}\frac{1}{\log2^{n}}\log\sum_{Q\in\mathcal{D}_{n}}\nu(Q)^{q}\geq\left(-\dim_{\infty}\left(\nu\right)-\epsilon\right)q.
\]
\end{proof}
Combining the previous two lemmas gives $s_{h}\leq d/\left(d-\dim_{\infty}\left(h\Lambda\right)\right)$,
as claimed at the beginning of \prettyref{subsec:Absolutely-continuous-measures}.

For what follows, we take advantage of the fact that many basic inequalities
for positive $r$ can simply be reversed to hold for negative $r$.
To see this, recall from the introduction, that for negative $r$,
\[
V_{n,r}\left(\nu\right)=\er_{n,r}\left(\nu\right)^{r}=\begin{cases}
\inf_{A\in\mathcal{A}_{n}}\int d\left(x,A\right)^{r}\d\nu\left(x\right), & r>0,\\
\sup_{A\in\mathcal{A}_{n}}\int d\left(x,A\right)^{r}\d\nu\left(x\right), & r<0.
\end{cases}
\]

For the reversed inequalities of the following lemma for the case
$r>0$ we refer to \cite[Lemma 4.14]{MR1764176}.
\begin{lem}
\label{lem:Basic_convex}For $r<0$ and a linear combination $\nu\coloneqq\sum s_{i}\nu_{i}$,
$s_{i}\geq0$, of finite measures $\nu_{i}$ and $n\in\N$, we have
\[
V_{n,r}\left(\nu\right)\leq\sum s_{i}V_{n,r}\left(\nu_{i}\right).
\]
Further, for $n\geq\sum n_{i}$, $n_{i}\in\N$, we have 
\[
V_{n,r}\left(\nu\right)\geq\sum s_{i}V_{n_{i},r}\left(\nu_{i}\right).
\]
\end{lem}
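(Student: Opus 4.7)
The plan is to exploit the fact that for $r<0$ the quantity $V_{n,r}(\nu)$ is defined as a \emph{supremum} over $A\in\mathcal{A}_{n}$, so that compared to the positive-order Graf--Luschgy estimates (\cite[Lemma 4.14]{MR1764176}) all relevant inequalities are simply reversed. The only additional pointwise ingredient beyond this sign flip is that $A\supseteq A'$ implies $d(\,\cdot\,,A)\leq d(\,\cdot\,,A')$, and therefore $d(\,\cdot\,,A)^{r}\geq d(\,\cdot\,,A')^{r}$ since $r<0$.

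For the first inequality, I would fix any $A\in\mathcal{A}_{n}$ and use linearity of the integral with respect to the measure,
\[
\int d(x,A)^{r}\d\nu(x)=\sum_{i}s_{i}\int d(x,A)^{r}\d\nu_{i}(x)\leq\sum_{i}s_{i}V_{n,r}(\nu_{i}),
\]
and then pass to the supremum over $A\in\mathcal{A}_{n}$ on the left to obtain $V_{n,r}(\nu)\leq\sum_{i}s_{i}V_{n,r}(\nu_{i})$.

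For the second inequality, for every $\epsilon>0$ I would choose $A_{i}\in\mathcal{A}_{n_{i}}$ with $\int d(x,A_{i})^{r}\d\nu_{i}(x)\geq V_{n_{i},r}(\nu_{i})-\epsilon$ (and, in the case $V_{n_{i},r}(\nu_{i})=+\infty$, pick $A_{i}$ so that the integral exceeds $1/\epsilon$), and set $A\coloneqq\bigcup_{i}A_{i}$. Then $\card(A)\leq\sum_{i}n_{i}\leq n$, so $A\in\mathcal{A}_{n}$, and since $A\supseteq A_{i}$ we have $d(x,A)^{r}\geq d(x,A_{i})^{r}$ for every $x$. Splitting $\nu=\sum_{i}s_{i}\nu_{i}$ summand by summand then gives
\[
V_{n,r}(\nu)\geq\int d(x,A)^{r}\d\nu=\sum_{i}s_{i}\int d(x,A)^{r}\d\nu_{i}\geq\sum_{i}s_{i}\int d(x,A_{i})^{r}\d\nu_{i}\geq\sum_{i}s_{i}\bigl(V_{n_{i},r}(\nu_{i})-\epsilon\bigr),
\]
and letting $\epsilon\downarrow0$ finishes the argument. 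There is no serious obstacle here; the only points requiring any care are the correct orientation of every inequality (because $r<0$) and the separate handling of $V_{n_{i},r}(\nu_{i})=+\infty$ via the $1/\epsilon$-choice above.
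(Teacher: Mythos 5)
Your proof is correct and follows essentially the same route as the paper: fix $A\in\mathcal{A}_{n}$ and use linearity plus the supremum for the first inequality, and take $A=\bigcup_i A_i$ with the monotonicity $d(x,A)^{r}\geq d(x,A_i)^{r}$ (valid since $r<0$) for the second. Your $\epsilon$-selection of near-optimal $A_i$ is just an explicit way of carrying out the paper's final step of taking the supremum over all tuples $(A_i)$, with the added (harmless) care for the case $V_{n_i,r}(\nu_i)=+\infty$.
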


\begin{proof}
For $A\in\mathcal{A}_{n}$, 
\[
\int d\left(x,A\right)^{r}\d\nu\left(x\right)=\sum s_{i}\int d\left(x,A\right)^{r}\d\nu_{i}\left(x\right)\leq\sum s_{i}V_{n,r}\left(\nu_{i}\right).
\]
Taking the supremum over $A\in\mathcal{A}_{n}$ gives the first inequality.

For the second inequality, assume $n\geq\sum n_{i}$ and note that
for $A_{i}\in\mathcal{A}_{n_{i}}$ we have $A\coloneqq\bigcup_{i}A_{i}\in\mathcal{A}_{n}$.
Hence,
\[
V_{n,r}\left(\nu\right)\geq\int d\left(x,A\right)^{r}\d\nu\left(x\right)=\sum s_{i}\int d\left(x,A\right)^{r}\d\nu_{i}\left(x\right)\geq\sum s_{i}\int d\left(x,A_{i}\right)^{r}\d\nu_{i}\left(x\right).
\]
Now taking the supremum over all $A_{i}\in\mathcal{A}_{n_{i}}$ gives
the desired second inequality.
\end{proof}
\begin{lem}
\label{lem:upper_bound_QE_Lebesgue} For $r\in\left(-d,0\right)$
we let $C_{r,d}\coloneqq2^{-r}+18^{d}/\left(2^{r}-2^{-d}\right)$.
Then for all $m\in\N$ and $A\in\mathcal{A}_{m}$ we have 
\[
\int d\left(x,A\right)^{r}\d\Lambda(x)\leq C_{r,d}m^{-r/d}.
\]
\end{lem}

\begin{proof}
Let us first consider the case $m=2^{nd}$ for $n\in\N$. We follow
the estimates in \prettyref{subsec:Lower-bounds} for the lower bound
of the quantization dimension with $\nu=\Lambda$. Observe that for
$n\in\N$ the optimal partition $P$ of cardinality $2^{nd}$ is in
this situation given by $\mathcal{D}_{n}$. As in the proof \prettyref{prop:LowerBound}
we partition $P$ into $P_{1}$, $P_{2}$ and obtain 
\begin{align*}
\int d(x,A)^{r}\d\Lambda(x) & =\sum_{\tilde{Q}\in P_{1}}\int_{\tilde{Q}}d(x,A)^{r}\d\Lambda(x)+\sum_{\tilde{Q}\in P_{2}}\int_{\tilde{Q}}d(x,A)^{r}\d\Lambda(x)\\
 & \leq\card(P)\cdot\underbrace{\Lambda\left(\tilde{Q}\right)^{r/d}\Lambda\left(\tilde{Q}\right)}_{=2^{-n(d+r)}}+\sum_{\tilde{Q}\in P_{2}}\int_{\tilde{Q}}d(x,A)^{r}\d\Lambda(x)\\
 & \leq m^{-r/d}+\sum_{\tilde{Q}\in P_{2}}\int_{\tilde{Q}}d(x,A)^{r}\d\Lambda(x).
\end{align*}
Using \prettyref{lem:Combinatoric1} and \prettyref{lem:Combinatoric2}
we estimate the second summand as follows: 
\begin{align*}
\sum_{\tilde{Q}\in P_{2}}\int d(x,A)^{r}\d\Lambda(x) & \leq\sum_{k=0}^{\infty}\sum_{\tilde{Q}\in P_{2}}\sum_{Q\in E_{k}\left(\tilde{Q}\right)}\int_{Q}d\left(x,A\right)^{r}\d\Lambda(x)\\
 & \leq\sum_{k=0}^{\infty}\sum_{\tilde{Q}\in P_{2}}\sum_{Q\in E_{k}\left(\tilde{Q}\right)}\underbrace{\Lambda\left(Q\right)^{r/d}\Lambda\left(Q\right)}_{=2^{-k(d+r)}}\\
 & \leq\sum_{k=n+1}^{\infty}\sum_{\tilde{Q}\in P_{2},\left|\tilde{Q}\right|>2^{-k}}6^{d}\card\left(A\cap B_{k}\left(\tilde{Q}\right)\right)\cdot2^{-k(d+r)}\\
 & \leq18^{d}\card(A)\sum_{k=n}^{\infty}2^{-k(d+r)}=\frac{18^{d}}{1-2^{-(d+r)}}m^{-r/d}.
\end{align*}
Combining the above, we obtain with $C:=\left(1+\frac{18^{d}}{1-2^{-(d+r)}}\right)$
\[
\int d(x,A)^{r}\d\Lambda(x)\leq\left(1+\frac{18^{d}}{1-2^{-(d+r)}}\right)m^{-r/d}=Cm^{-r/d}.
\]
Now for $m\in\N$ and $A\in\mathcal{A}_{m}$ arbitrary, there find
$n\in\N$ with $2^{nd}<m\leq2^{\left(n+1\right)d}$. Using our result
on the special subsequence, we get 
\[
\int d\left(x,A\right)^{r}\d\Lambda\left(x\right)\leq C2^{-(n+1)r}=\left(C2^{-r}\right)m^{-r/d},
\]
which proves our claim by setting $C_{r,d}:=2^{-r}C$.
\end{proof}
\begin{lem}
\label{lem:QC_UnitCube} For the uniform distribution $\Lambda$ on
$\Q$, the $d$-dimensional quantization coefficient $\c_{r,d}\left(\Lambda\right)$
of order exists, is finite, and positive only for $r\in\left(-d,+\infty\right)$.
\end{lem}

\begin{proof}
This statement is well known for $r\geq1$ (see e.\,g\@. \cite{MR1764176};
the proof there also works for $r\in\left(0,1\right]$) and for $r=0$
see \cite[Theorem 3.2]{MR2055056}. Now for $r\in\left(-d,0\right)$,
note that by an observation from the introduction of \cite{KN22b}
we know that for any subcube $Q\subset\Q$ with side length $a\in\left(0,1\right)$
and $\Lambda_{Q}$ denoting the normalized restriction of $\Lambda$
to $Q$, we have
\[
V_{n,r}\left(\Lambda_{Q}\right)=a^{r}V_{n,r}\left(\Lambda\right)
\]
Now, for $k\in\N$, let us divide $\Q$ evenly into $k^{d}$ axis-parallel
subcubes $\left\{ Q_{i,k}:i=1,\ldots,k^{d}\right\} $ with side length
$1/k$. Then we have $\Lambda=\sum k^{-d}\Lambda_{Q_{i,k}}$ and hence
by part (2) of \prettyref{lem:Basic_convex}, for $m\in\N$ and $n\coloneqq k^{d}m,$
\[
V_{n,r}\left(\Lambda\right)\geq\sum_{i=1}^{k^{d}}k^{-d}V_{m,r}\left(\Lambda_{Q_{i,k}}\right)=k^{-r}V_{m,r}\left(\Lambda\right).
\]
Therefore,
\[
n^{r/d}V_{n,r}\left(\Lambda\right)=k^{r}m^{r/d}V_{n,r}\left(\Lambda\right)\geq m^{r/d}V_{m,r}\left(\Lambda\right).
\]
This gives for each $m\in\N$
\[
\limsup_{n\to\infty}n^{1/d}\er_{n,r}\left(\Lambda\right)\leq m^{1/d}\er_{m,r}\left(\Lambda\right).
\]
Hence, in the chain of inequalities 
\[
\limsup_{n\to\infty}n^{1/d}\er_{n,r}\left(\Lambda\right)\leq\inf_{m}m^{1/d}\er_{m,r}\left(\Lambda\right)\leq\liminf_{n\to\infty}n^{1/d}\er_{n,r}\left(\Lambda\right)
\]
in fact equality holds and the claimed limit exists and is smaller
than $V_{1,r}\left(\Lambda\right)^{1/r}<\infty$. The limit is also
positive by \prettyref{lem:upper_bound_QE_Lebesgue}. The fact that
$\c_{r,d}\left(\Lambda\right)=0$ for $r\leq-d$ follows from \prettyref{prop:r<-dimoo(nu)}.
\end{proof}
\begin{proof}[Proof of \prettyref{thm:absoltutelyContinuousCase}]
 With $\nu\coloneqq h\Lambda$, the statement for positive $r$ follows
by \cite[Theorem 6.2]{MR1764176} and $r<-\dim_{\infty}(\nu)$ is
also clear from \prettyref{prop:r<-dimoo(nu)}. For $r\in(-d,0)$,
we first follow almost literally the proof of \cite[Theorem 6.2]{MR1764176}:

First, let us consider only densities $h$ that are constant on cubes
from $\mathcal{D}_{n}$. By using \prettyref{lem:Basic_convex} and
\prettyref{lem:QC_UnitCube} at the appropriate places and exchanging
all relevant inequalities with their inverses and limes superior with
limes inferior we see that the theorem holds for such densities. For
example, for $r$ negative \cite[Lemma 6.8]{MR1764176} provides a
unique maximizer (instead of minimizer for positive $r$) as required
in the proof for the upper bound of $\limsup_{n}n^{r/d}V_{n,r}\left(h\Lambda\right)$.
Namely, at the appropriate place we need the following general observation,
which is an immediate consequence of Hölder's inequality: For $m\in\mathbb{N}$
and numbers $s_{i}>0$, let $B=\left\{ (v_{1},\ldots,v_{m})\in(0,\infty)^{m}:\sum_{i=1}^{m}v_{i}\leq1\right\} $
and 
\[
t_{i}=\frac{s_{i}^{d/(d+r)}}{\sum_{j=1}^{m}s_{j}^{d/(d+r)}},\quad1\leq i\leq m.
\]
Then the function $F:B\to\mathbb{R}_{+},\quad F(v_{1},\ldots,v_{m})=\sum_{i=1}^{m}s_{i}v_{i}^{-r/d}$satisfies
\[
F(t_{1},\ldots,t_{m})=\left(\sum_{i=1}^{m}s_{i}^{d/(d+r)}\right)^{(d+r)/d}=\max_{(v_{1},\ldots,v_{m})\in B}F(v_{1},\ldots,v_{m})
\]
and $(t_{1},\ldots,t_{m})$ is the unique maximizer of $F$.

For all $r\in\left(d/s_{h}-d,0\right)$ we have $\left\Vert h\right\Vert _{\frac{d}{d+r}}<\infty$
and, by Jensen's inequality, we find that for the conditional expectation
$h_{k}\coloneqq\mathbb{E}\left(h\mid\mathcal{D}_{k}\right)$ we have
$\int h_{k}^{d/(d+r)}\d\Lambda\leq\int h^{d/(d+r)}\d\Lambda<\infty$.
Since $\sigma(\mathcal{D}_{k})\nearrow\sigma(\mathcal{D})=\mathcal{B}$,
by the martingale convergence theorem, we infer that the sequence
$\left(h_{k}\right)$ converges to $h$ almost surely and, for all
$q\leq d/\left(d+r\right)$, also in $L_{\Lambda}^{q}$; in particular,
we have $\left\Vert h_{k}\right\Vert _{q}\to\left\Vert h\right\Vert _{q}$.

For the final step in the proof we argue as follows. For $d/s_{h}-d<r'<r<0$,
$n\in\N$, and $A\in\mathcal{A}_{n}$ we have by Hölder's inequality
\begin{align*}
\int d\left(\,\cdot\,,A\right)^{r}h\d\Lambda & =\int d\left(\,\cdot\,,A\right)^{r}\left(h-h_{k}+h_{k}\right)\d\Lambda\\
 & \leq V_{n,r}\left(h_{k}\Lambda\right)+\int d\left(\,\cdot\,,A\right)^{r}\left|h-h_{k}\right|\d\Lambda\\
 & \leq V_{n,r}\left(h_{k}\Lambda\right)+\left(\int d\left(\,\cdot\,,A\right)^{-dr/r'}\d\Lambda\right)^{-r'/d}\left\Vert h-h_{k}\right\Vert _{d/\left(d+r'\right)}\\
 & \leq V_{n,r}\left(h_{k}\Lambda\right)+\left(V_{n,-rd/r'}\left(\Lambda\right)\right)^{-r'/d}\left\Vert h-h_{k}\right\Vert _{d/\left(d+r'\right)}.
\end{align*}
If we take the supremum over all $A\in\mathcal{A}_{n}$, multiply
by $n^{r/d}$ and take the limes superior, we obtain, 
\begin{align*}
\limsup_{n\to\infty}n^{r/d}V_{n,r}\left(h\Lambda\right) & \leq\limsup_{n\to\infty}n^{r/d}V_{n,r}\left(h_{k}\Lambda\right)+\limsup_{n\to\infty}n^{r/d}V_{n,-rd/r'}\left(\Lambda\right)^{-r'/d}\left\Vert h-h_{k}\right\Vert _{d/\left(d+r'\right)}\\
 & =\left(\Phi_{r}\left(h_{k}\right)\c_{r,d}\left(\Lambda\right)\right)^{r}+\c_{rd/r',d}\left(\Lambda\right)^{-r'/d}\left\Vert h-h_{k}\right\Vert _{d/\left(d+r'\right)}\\
 & \to\left(\Phi_{r}\left(h_{k}\right)\c_{r,d}\left(\Lambda\right)\right)^{r}\,\,\:\,\text{for }k\to\infty.
\end{align*}
Taking the $r$-th root, this proves the lower bound on $\liminf_{n\to\infty}n^{1/d}\er_{n,r}\left(h\Lambda\right)$.

Similarly,
\begin{align*}
V_{n,r}\left(h\Lambda\right) & \geq\int d\left(\,\cdot\,,A\right)^{r}h\d\Lambda=\int d\left(\,\cdot\,,A\right)^{r}\left(h-h_{k}+h_{k}\right)\d\Lambda\\
 & \geq\int d\left(\,\cdot\,,A\right)^{r}h_{k}\d\Lambda-\int d\left(\,\cdot\,,A\right)^{r}\left|h-h_{k}\right|\d\Lambda\\
 & \geq\int d\left(\,\cdot\,,A\right)^{r}h_{k}\d\Lambda-\left(\int d\left(\,\cdot\,,A\right)^{-dr/r'}\d\Lambda\right)^{-r'/d}\left\Vert h-h_{k}\right\Vert _{d/\left(d+r'\right)}\\
 & \geq\int d\left(\,\cdot\,,A\right)^{r}h_{k}\d\Lambda-\left(V_{n,-rd/r'}\left(\Lambda\right)\right)^{-r'/d}\left\Vert h-h_{k}\right\Vert _{d/\left(d+r'\right)}.
\end{align*}
Again, if we take the supremum over all $A\in\mathcal{A}_{n}$ —this
time on the right-hand side—, multiply by $n^{r/d}$ and take the
limes inferior, we obtain,
\begin{align*}
\liminf_{n\to\infty}n^{r/d}V_{n,r}\left(h\Lambda\right) & \geq\liminf_{n\to\infty}n^{r/d}V_{n,r}\left(h_{k}\Lambda\right)-\limsup_{n\to\infty}n^{r/d}V_{n,-rd/r'}\left(\Lambda\right)^{-r'/d}\left\Vert h-h_{k}\right\Vert _{d/\left(d+r'\right)}\\
 & =\left(\Phi_{r}\left(h_{k}\right)\c_{r,d}\left(\Lambda\right)\right)^{r}-\c_{rd/r',d}\left(\Lambda\right)^{-r/d'}\left\Vert h-h_{k}\right\Vert _{d/\left(d+r'\right)}\\
 & \to\left(\Phi_{r}\left(h\right)\c_{r,d}\left(\Lambda\right)\right)^{r}\,\,\:\,\text{for }k\to\infty.
\end{align*}
This proves the upper bound on $\limsup_{n\to\infty}n^{1/d}\er_{n,r}\left(h\Lambda\right)$.

To cover the case $r=d/s_{h}-d$ with $\left\Vert h\right\Vert _{s_{h}}=\infty$,
we consider the truncated version $h\wedge s\1$, for $s>0$, which
gives $\limsup n^{1/d}\er_{n,r}\left(h\Lambda\right)\leq\limsup n^{1/d}\er_{n,r}\left(\left(h\wedge s\1\right)\Lambda\right)=\c_{r,d}\left(\Lambda\right)\Phi_{r}\left(h\wedge s\right)\to0$
for $s\nearrow\infty$.

For $r=0$ the claimed upper bound is contained in \cite[Theorem 3.4]{MR2055056}.
For the lower bound we make use of our results for negative $r$ together
with the fact that $r\mapsto n^{1/d}\left(V_{n,r}\left(h\Lambda\right)\right)^{1/r}$,
$n\in\N$, is monotonically increasing on $r\in\left(-d,0\right]$
and that
\begin{align*}
\lim_{r\nearrow0}\Phi_{r}\left(h\right) & =\lim_{r\nearrow0}\left(\int h^{-r/\left(d+r\right)}h\d\Lambda\right)^{\frac{-\left(d+r\right)/r}{-d}}\\
 & =\lim_{t\searrow0}\exp\left(-1/d\cdot1/t\cdot\log\int\exp\left(t\log\left(h\right)\right)h\d\Lambda\right)\\
 & =\exp\frac{\int\frac{d}{dt}\exp\left(t\log\left(h\right)\right)|_{t=0}h\d\Lambda}{-d\int\exp\left(0\log\left(h\right)\right)h\d\Lambda}\\
 & =\exp-\left(1/d\right)\int\exp\left(0\log\left(h\right)\right)\log\left(h\right)h\d\Lambda=\Phi_{0}\left(h\right).
\end{align*}
In the third equality we used that $\left(\exp\left(t\log\left(h\right)\right)h\right)/t<h^{1+\epsilon}\in L_{\Lambda}^{1}$
for all $t\in\left(0,\epsilon\right)$ and some $\epsilon>0$ in tandem
with Lebesgue's dominated convergence theorem.
\end{proof}
\begin{example}
\label{exa:abs_cnt_with_strict_inequ} We provide an example of an
absolutely continuous measure $\nu$ on $\Q\coloneqq[0,1]$ such that
$-d<-\dim_{\infty}\left(\nu\right)<d/s_{h}-d$. For this we consider
a disjoint family $\left(I_{n,k}:n\in\N,1\leq k\leq2^{n}\right)$
of pairwise disjoint subintervals of $\left[0,1\right]$ such that
for each $n\in\N$ and $k\in\left\{ 1,\ldots,2^{n}\right\} $ we have
$\left|I_{n,k}\right|=2^{-3n+1}$. We define a measure by
\[
\nu\coloneqq\sum_{n\in\N,k=1,\dots,2^{n}}\Lambda\left(I_{n,k}\right)^{-1/2}\Lambda|_{I_{n,k}}.
\]
 Since $\nu\left(I_{n,k}\right)=\Lambda\left(I_{n,k}\right)^{1/2}$
and 
\[
\nu\left(\left[0,1\right]\right)=\sum_{n\in\N,k=1,\dots,2^{n}}\Lambda\left(I_{n,k}\right)^{1/2}=\sum_{n\in\N}2^{\left(-3n+1\right)/2+n}<\infty
\]
this measure is finite and absolutely continuous with density 
\[
h\coloneqq\sum_{n\in\N,k=1,\dots,2^{n}}\Lambda\left(I_{n,k}\right)^{-1/2}\1_{I_{n,k}}.
\]
Then $\int_{\bigcup_{k}I_{n,k}}h^{s}\d\Lambda=2^{n}2^{s\left(n3/2-1/2\right)-3n+1}=2^{n\left(s3/2-2\right)+1-s/2}$
and therefore 
\[
\int h^{s}\d\Lambda=\sum_{n\in\N}2^{n\left(s3/2-2\right)+1-s/2}=2^{-s/2+1}\sum_{n\in\N}2^{\left(3/2s-2\right)n}\begin{cases}
=\infty, & \text{for }s\geq4/3,\\
<\infty, & \text{for }s<4/3.
\end{cases}
\]
Hence, $s_{h}=4/3$. On the other hand, $\dim_{\infty}\left(\nu\right)=1/2$
and $d=1$ giving 
\[
-d=-1<-\dim_{\infty}\left(\nu\right)=-1/2<-1/4=d/s_{h}-d.
\]
\end{example}

\begin{example}
\label{exa:Our-second-example}Our second example concerns an absolutely
continuous measure $\nu$ on $\Q\coloneqq\left[0,1\right]$ this time
with density 
\[
h(x)=x^{-1/2}\left(\log\left(\frac{x}{10}\right)\right)^{-2},\;x\in\left[0,1\right].
\]
A straightforward calculation shows that $\dim_{\infty}(\nu)=1/2$
and $s_{h}=2$. This fact in combination with \prettyref{lem:AbsolLq}
and \prettyref{lem:lowerboudonBeta} gives 
\[
\beta_{\nu}\left(q\right)=\begin{cases}
1-q, & 0\leq q\leq2,\\
-\left(1/2\right)q, & q>2,
\end{cases}
\]
and consequently, $\tau_{\J_{\nu,r}}(q)=\beta_{\nu}(q)-rq$ for $q\in[0,2]$,
$a_{\nu}=2$, and $D_{t}(\nu)=1$ for all $0<t<-1/2$. In particular,
by right continuity (\prettyref{lem:GeometricBounds}), monotonicity
(\prettyref{lem:monontonicity}) and regularity (\prettyref{thm:LqRegularImpliesRegular})
we have
\[
\lim_{t\searrow-1/2}D_{t}(\nu)=1=\frac{a_{\nu}}{a_{\nu}-1}\dim_{\infty}(\nu)\geq D_{-1/2}(\nu).
\]
 It is more involved to show that $D_{-1/2}(\nu)\geq1$:

For $I(A)\coloneqq\int_{0}^{1}\left(d(x,A)\right)^{-1/2}h(x)\d\Lambda\left(x\right)$
with $A=\{0=a_{1}<a_{2}<\cdots<a_{n}=1\}\subset[0,1]$, $n\geq2$,
our claim is that for a universal constant $C>0$,
\[
I(A)\leq C\sqrt{n}.
\]
This implies that $\sup_{A\in\mathcal{A}_{n}}V_{n,-1/2}(\nu)\leq C\sqrt{n}$
and therefore $\underline{D}_{-1/2}(\nu)\geq1$. Note that we have
made use of the observation that the assumption $\left\{ 0,1\right\} \subset A$
does not result in a loss of generality as a consequence of $n\mapsto V_{n,-1/2}(\nu)$
being monotonically increasing. We proceed to prove this claim as
follows: \textbf{}\\
\textbf{~}\\
\emph{Partitioning the interval.} For each $1\leq i\leq n-1$, set
$J_{i}:=\left[a_{i},a_{i+1}\right)$, a partition of $\left[0,1\right)$,
and define $\ell_{i}:=a_{i+1}-a_{i}$. Then, $I(A)=\sum_{i=1}^{n-1}I_{i}$
with $I_{i}\coloneqq\int_{J_{i}}\left(d(x,A)\right)^{-1/2}h(x)\d\Lambda\left(x\right).$
We derive two useful upper bounds for this last integral: Since on
each cell $J_{i}$, the density $h$ is strictly positive and decreasing,
we have
\begin{equation}
I_{i}=\int\limits_{a_{i}}^{\frac{a_{i}+a_{i+1}}{2}}\negthickspace\negthickspace\frac{h\left(x\right)}{\sqrt{x-a_{i}}}\d\Lambda\left(x\right)+\int\limits_{\frac{a_{i}+a_{i+1}}{2}}^{a_{i+1}}\negthickspace\negthickspace\frac{h(x)}{\sqrt{a_{i+1}-x}}\d\Lambda\left(x\right)\leq2h(a_{i})\sqrt{2}\ell_{i}^{1/2}\leq3h\left(a_{i}\right)\ell_{i}^{1/2}.\label{eq:ExUpperBound1}
\end{equation}
The second bound considers the behavior for $a_{i}$ close to $0$,
for which we get
\begin{equation}
I_{i}\le2\int_{0}^{\ell_{i}/2}\frac{1}{x(\log(x/10))^{2}}\d\Lambda\left(x\right)=\frac{2}{|\log(\ell_{i}/20)|}.\label{eq:ExUpperBound2}
\end{equation}
\emph{Partition of the indices.} We now partition the set of indices
into three disjoint cases according to the location and size of the
intervals:
\[
R:=\left\{ i:a_{i}<\e^{-\sqrt{n}}\right\} ,\qquad S:=\left\{ i:\ell_{i}>a_{i}\right\} \setminus R,\qquad T:=\left\{ i:\ell_{i}\leq a_{i}\right\} \setminus R.
\]
 For each $P\in\{R,S,T\}$, define $I_{P}:=\sum_{i\in P}I_{i}$. Thus,
$I(A)=I_{R}+I_{S}+I_{T}.$
\begin{lyxlist}{00.00.0000}
\item [{\emph{Case~1:~Small~$a_{i}\,\left(i\in R\right)$.}}] Using
bound \prettyref{eq:ExUpperBound2} above, and that for $a_{i}<e^{-\sqrt{n}}$,
we have $\ell_{i}\leq\e^{-\sqrt{n}}$ by construction, so $|\log(\ell_{i}/20)|\geq\sqrt{n}$
for large $n$. Therefore, $I_{R}\leq n\cdot\max_{i\in R}\frac{2}{|\log(\ell_{i}/20)|}\leq2n/\sqrt{n}=2\sqrt{n}.$
\item [{\emph{Case~2:~Large~intervals~$\left(i\in S\right)$.}}] For
intervals where $\ell_{i}>a_{i}$, using bound \prettyref{eq:ExUpperBound2}
again, $\int_{J_{i}}h(x)|x-a_{i}|^{-1/2}\d\Lambda\left(x\right)\leq1,$
and the number of such intervals is controlled by $2^{\card\left(S\right)-1}\e^{-\sqrt{n}}\leq1$,
so $\card\left(S\right)\leq3\sqrt{n}$, hence $I_{S}\leq3\sqrt{n}.$
\item [{\emph{Case~3:~Small~intervals~away~from~0~$\left(i\in T\right)$.}}] For
these, applying Hölder's inequality, the bound \prettyref{eq:ExUpperBound1}
and using the integral comparison criterion,
\begin{align*}
I_{T} & \leq3\sum_{i\in T}h\left(a_{i}\right)\sqrt{\ell_{i}}\leq3\sqrt{\card\left(T\right)}\left(\sum_{i\in T}\left(h\left(a_{i}\right)\right)^{2}\ell_{i}\right)^{1/2}\\
 & \leq3\sqrt{n}\left(\sum_{i\in T}2\left(h\left(a_{i}+\ell_{i}\right)\right)^{2}\ell_{i}\right)^{1/2}\leq6\sqrt{n}\underbrace{\left(\int h^{2}\d\Lambda\right)^{1/2}}_{=1/\sqrt{3}\cdot\left(\log\left(10\right)\right)^{-3/2}}<\sqrt{n}.
\end{align*}
\end{lyxlist}
Combining the above, we can conclude that the constant $C$ can be
chosen as $6$.
\end{example}

\printbibliography

\end{document}